\documentclass[12pt]{iopart}
\usepackage{amsthm} 
\usepackage{iopams}
\usepackage{graphicx}

\def\R{I\!\!R}
\def\N{I\!\!N}
\def\cD{\mathcal{D}}
\def\cN{\mathcal{N}}

\def\zdel{z^\delta}
\def\VXtil{{\widetilde{VX}}}
\def\VXhat{{\widehat{VX}}}
\def\Vtil{{\tilde{V}}}
\def\Vhat{{\hat{V}}}
\def\sqD{\sqrt{|D|}}
\def\cmega{{\Omega\setminus\omega}}

\newtheorem{theorem}{Theorem}[section]
\newtheorem{proposition}[theorem]{Proposition}

\newtheorem{lemma}[theorem]{Lemma}
\newtheorem{assumption}[theorem]{Assumption}

\newtheorem{remark}[theorem]{Remark}

\begin{document}
\title[An online parameter identification method for time dependent PDEs]{An online parameter identification method for time dependent partial differential equations}
\author{R Boiger$^1$, B Kaltenbacher $^1$}
\address{$^1$ Department of Mathematics, Alpen-Adria-Universit\"at Klagenfurt, Austria}
\ead{romana.boiger@aau.at and barbara.kaltenbacher@aau.at}
\begin{abstract}
Online parameter identification is of importance, e.g., for model predictive control. Since the parameters have to be identified simultaneously to the process of the modeled system, dynamical update laws are used for state and parameter estimates. 
Most of the existing methods for infinite dimensional systems either impose strong assumptions on the model or cannot handle partial observations. Therefore we propose and analyze an online parameter identification method that is less restrictive concerning the underlying model and allows for partial observations and noisy data.
The performance of our approach is illustrated by some numerical experiments.
\end{abstract}
\pacs{}
\submitto{\IP}
\maketitle
\section{Introduction}

Dynamical systems like ordinary differential equations or time-dependent partial differential equations play an important role for modeling instationary processes in science and technology. Such models often contain parameters that cannot be accessed directly and therefore must be determined from measurements, which leads to inverse problems. In many applications, e.g., in model predictive control, the parameter identification has to take place during the operation of the considered system. Hence online methods become necessary. 
Examples of applications range from HVAC (heating ventilation airconditioning) systems via battery charge estimation to aircraft dynamics, see e.g. \cite{Jategaonkar06}, \cite{Maasoumy14}, \cite{Rahimi14}.

In many applications we face the additional problem of having only partial and noisy observations of the state. 
Motivated by these facts, in this paper we propose an online identification method that is also applicable in case of indirect partial observations and takes into account noisy data. For this purpose we employ a dynamic update law for both the estimated parameters and the state estimate that is strongly inspired by the schemes from \cite{Baumeister97} and \cite{Kuegler10}.
Online parameter identification has been extensively studied in the finite dimensional setting, e.g. \cite{Ioannou}, \cite{Narendra05} or \cite{Sastry}.
The literature becomes much more scarce when dealing with infinite dimensional models as arising in the context of partial differential equations. We refer to the extensive literature review in \cite{Baumeister97} and \cite{Kuegler10}. More recent work on this topic can e.g. be found in \cite{DihlmannHaasdonk}.

The paper is organized as follows:
In section 2 we state the underlying differential equation with the according assumptions and define the online parameter identification method. In the next section the convergence analysis of the method is discussed for the exact data case, the case with noisy data and also the one with smooth noisy data.
Some examples and numerical experiments illustrate the performance of the method in section 4. We conclude with some remarks and an outlook in section 5. 

\section{Online Parameter Identification method}
In this chapter we present the underlying differential equation and the corresponding assumptions. Further we introduce an online parameter identification method. 

Let $Q$, $X$ and $Z$ be Hilbert spaces.
We consider the abstract ordinary differential equation 
\begin{eqnarray}
q_t(t,x)=0 \label{eq:main}\\
u_t(t,x)+C(q(t),u(t))(x)=f(t,x) \nonumber\\
u(0,x)=u_0(x) \nonumber\\
q(0,x)=q_0(x) \nonumber
\end{eqnarray}
where $C\colon Q\times\cD(C) (\subseteq Q\times X)\to X$, $f\colon[0,\infty)\times X \to X$ and the initial value for $u$, namely $u_0$ are given.
The inverse problem we are interested in is to find the parameter $q$ from given observations of the state $u$ over time, $Gu(t,x)=z(t,x)$, where $G\colon X \to Z$ is the observation operator and $Z$ the observation space. For simplicity of exposition we consider a linear observation operator here. Most of what follows can be carried over to the case of nonlinear observations.

We will denote the exact solution by $q^{\ast}$ and $u^{\ast}$.
To define an evolution system for identifying $q^\ast$ from measurements $z$ we split $u^\ast$ in its ``observed part'' $Ru^\ast=G^\dagger z \in \mathcal{N}(G)^\bot\subseteq \tilde{V}$ and its ``unobserved'' part $Pu^\ast=u^\ast-Ru^\ast \in \mathcal{N}(G) \subseteq \Vhat$ by appropriate projections $R$ and $P$. Here $\Vtil\subseteq \VXtil \subseteq X$ and $\Vhat \subseteq \VXhat \subseteq X$ with the corresponding embedding constants $C_{\tilde{V}\widetilde{VX}},C_{\tilde{VX}X},C_{\hat{V}\widehat{VX}},C_{\widehat{VX}X}$ and the operator $G^\dagger \colon Z \to X$ is the Moore-Penrose Inverse of $G$. 
Hence the projection $R$ for the ``observed'' part is the projection on the orthogonal complement of the nullspace of $G$, namely $R \colon X \to \mathcal{N}(G)^\bot$, $R=G^\dagger G$.
The orthogonal projection $P$ is the projection on the nullspace of $G$, that is $P\colon X \to \mathcal{N}(G)$, $P=I-R$.
\begin{assumption}\label{ass:equation}
For the abstract ODE (\ref{eq:main}) we assume that
\begin{enumerate}
\item the exact solution $u^\ast$ exists and stays bounded, i.e. for all times $t>0$ we have $u^\ast(t) \in \mathcal{B}_\rho(u_0) \subseteq \mathcal{D}(C)$, \newline
where $\mathcal{B}_\rho(u_0)=\left\{v+w \in \tilde{V}+\hat{V} \mid \left\|v-Ru_0\right\|_{\tilde{V}}+\left\|w-Pu_0\right\|_{\hat{V}}\leq \rho \right\}$;
\item the operator $C$ satisfies a Lipschitz condition with respect to the second variable, i.e. for all times $t>0$ and for all $v+w\in \tilde{V}+\hat{V}$ 
\begin{equation}
\label{eq:LipschitzC}
\left\|C\left(q^\ast\, , u^\ast(t)+v+w\right)-C\left(q^\ast \, , u^\ast(t)\right)\right\|_X \leq L_C \left(\left\|v\right\|_{\tilde{V}}+\left\|w\right\|_{\hat{V}}\right)
\end{equation} holds;
\item the operator $C$ can be split in a part that is dependent of $q$ and the rest: 

$C(q,u)=A(u)q+B(u)$;
\item for all $u\in \mathcal{B}_\rho(u_0)$ the operator $A(u) \colon Q \to X$ is linear and bounded and there exists $C_A>0$ such that 
\begin{equation}\|A(u^\ast+v)\|_{Q \rightarrow X}\leq C_A(1+\|v\|_{\hat{V}}) \quad \forall \, v \in \hat{V} \label{AbdV}
\end {equation}
 or 
\begin{equation}
\|A(u^\ast+v)\|_{Q \rightarrow X}\leq C_A(1+\|v\|_X) \quad \forall \, v \in X \label{AbdX}
\end{equation}
or 
\begin{equation} \label{AbdVV}
\|A(u^\ast+\hat{v}+\tilde{v})\|_{Q\rightarrow X}\leq C_A(1+\|\hat{v}\|_\Vhat+\|\tilde{v}\|_\Vtil) \quad \forall \, \hat{v} \in \Vhat,\, \tilde{v}\in \Vtil;
\end{equation}
\item there exist coercive and bounded operators $M \colon \tilde{V} \to X$ and $N \colon  \hat{V} \to X$ i.e.
\begin{itemize}
\item there exist constants $c_M$ and $C_M$ such that for all $v \in \tilde{V}$ $\left(Mv,v\right)_X \geq c_M \left\|v\right\|^2_{\widetilde{VX}}$ and $\left\|RMv\right\|_X\leq C_M \left\|v\right\|_{\tilde{V}}$;
\item  there exist constants $c_N$ and $C_N$ such that for all $v \in \hat{V}$ $\left(Nv,v\right)_X \geq c_N \left\|v\right\|^2_{\widehat{VX}}$ and $\left\|PNv\right\|_X\leq C_N \left\|v\right\|_{\hat{V}}$.
\end{itemize}
\end{enumerate}
\end{assumption}
Note that by continuity of the embeddings $\Vhat \hookrightarrow X$, $\Vtil+\Vhat \hookrightarrow X$, (\ref{AbdX}) is sufficient for (\ref{AbdV}), (\ref{AbdVV}).
Conditions 1., 2. and 3. are similar to Assumptions 1 and 2 in \cite{Kuegler10}.

Now we want to introduce our online parameter identification method. Online identification means that the parameter identification, the data collection process and the operation of the system are taking place at the same time. Accurate parameter values are needed for making decisions while the system is in operation. Therefore our online parameter identification method includes a dynamical update law for the parameter and state estimates. 

\begin{eqnarray}
\hat{q}_t-A(Ru^\ast+P\hat{u})^*(R\hat{u}-Ru^\ast)=0
\label{eq:est_exact_q}\\
\hat{u}_t+C(\hat{q},Ru^\ast+P\hat{u})+\mu RM \frac{R\hat{u}-Ru^\ast}{\|R\hat{u}-Ru^\ast\|_\Vtil}+\nu PN P\hat{u} =f
\label{eq:est_exact_u}\\
(\hat{q},\hat{u})(0)=(\hat{q}_0,\hat{u}_0)
\label{eq:est_exact0}
\end{eqnarray}
where $\hat{u}_0$ need not coincide with $u_0$.

The method is strongly motivated by the methods proposed by K\"ugler \cite{Kuegler10} and by Baumeister et. al. \cite{Baumeister97}. The main difference compared to \cite{Baumeister97} is that we also allow for partial observations, which often occur in applications. This is also to some extent possible with the method from \cite{Kuegler10}, however in contrast to \cite{Kuegler10} we do not assume monotonicity of the operator $C$.
\section{Convergence Analysis}
In this chapter we consider convergence of the estimator in the exact data case as well as in case of noisy or smooth noisy data, respectively.
To do so we take a look at the errors between the exact solution $(q^\ast,Ru^\ast, Pu^\ast)$ and the estimated parameter $\hat{q}$ as well as the error in the projected states $R\hat{u}$ and $P\hat{u}$ that we denote by $e$, $r$ and $p$.
The error components 
\begin{equation}\label{erp}
e=\hat{q}-q^\ast\,, \quad r=R\hat{u}-Ru^\ast\,, \quad p=P\hat{u}-Pu^\ast 
\end{equation}
satisfy the following system of differential equations, where we split up the differential equation for the state in the ``observed'' and the ``unobserved'' part
\begin{eqnarray}
\fl e_t-A(u^\ast+p)^\ast r=0  \label{eq:errorsys_e}\\
\fl r_t+RC(q^\ast,u^\ast+p)-RC(q^\ast,u^\ast)+RA(u^\ast+p)e+\mu R M \frac{r}{\left\|r\right\|_{\tilde{V}}}=0 \label{eq:errorsys_r}\\
\fl p_t+PC(q^\ast,u^\ast+p)-PC(q^\ast,u^\ast)+PA(u^\ast+p)e+\nu PNP \hat{u}=0 \label{eq:errorsys_p}\\
\fl (e,r,p)(0)=(\hat{q}_0-q^\ast, R(\hat{u}_0-u_0), P(\hat{u}_0-u_0)). \nonumber
\end{eqnarray}
Here we have used the identities $Ru^\ast+P\hat{u}=u^\ast+p$ and
\begin{equation} \label{id1}
\fl C(\hat{q},Ru^\ast+P\hat{u})-C(q^\ast,u^\ast)\pm C(q^\ast, R u^\ast +P\hat{u})=C(q^\ast,u^\ast+p)-C(q^\ast,u^\ast)+A(u^\ast+p)e.
\end{equation}
as well as Assumption \ref{ass:equation}.

\subsection{Convergence with exact data}
\subsubsection{Well-definedness}
To obtain existence and boundedness of the solutions according to our method (\ref{eq:est_exact_q}), (\ref{eq:est_exact_u}), (\ref{eq:est_exact0}), we first multiply (\ref{eq:errorsys_e}) and (\ref{eq:errorsys_r}) with $e$ and $r$ respectively and integrate with respect to time over an interval $[t_1,t_2]$, $t_1$, $t_2>0$ to get, using Assumption \ref{ass:equation},

\begin{eqnarray}
\frac{1}{2}\left[\left\|e\right\|^2_Q+\left\|r\right\|^2_X\right]^{t_2}_{t_1}=\int^{t_2}_{t_1}\left\{(e_t,e)_Q+(r_t,r)_X\right\}d\tau \nonumber\\
=-\int^{t_2}_{t_1}\left\{\Big(RC(q^\ast,u^\ast+p)-RC(q^\ast,u^\ast)+\mu RM \frac{r}{\left\|r\right\|_{\tilde{V}}},r\Big)_X \right\}d\tau \nonumber \\
\leq - \int^{t_2}_{t_1}\left\{-L_C\left\|p\right\|_{\hat{V}}\left\|r\right\|_X+c_M \mu \frac{\|r\|^2_{\widetilde{VX}}}{\|r\|_{\tilde{V}}}\right\}d\tau. \label{align:neplusnr}
\end{eqnarray}
We see that the equation for $\hat{q}$ was designed such that the terms containing $A$ cancel out.
The above estimate leads us to choose $\mu$ according to 
\begin{assumption}
\label{cond:mu_1} For all $t>0$
$$\quad \mu(t)\geq \frac{2 L_C}{c_M} \|p(t)\|_{\Vhat} \frac{\|r(t)\|_X\|r(t)\|_{\Vtil} }{\|r(t)\|_{\VXtil}^2} \,.$$
\end{assumption}
\noindent
Therewith we obtain
\begin{eqnarray*}
\frac{1}{2} \left[\|e\|_Q^2+\|r\|_X^2\right]_{t_1}^{t_2}\leq-\int^{t_2}_{t_1}\left\{-L_C\|p\|_{\hat{V}}\|r\|_X+2L_C\|p\|_{\hat{V}}\|r\|_X\right\}d\tau \\
 \leq -L_C\int_{t_1}^{t_2} \|p\|_{\Vhat} \|r\|_X \, d \tau <0 \,.
\end{eqnarray*}
This particularly implies boundedness
$$
\forall \, t>0\, : \ \|e(t)\|_Q^2+\|r(t)\|_X^2\leq\|e(0)\|_Q^2+\|r(0)\|_X^2\,,
$$
and finiteness of the integral
$$
\forall \, T>0\, : \ \int_0^T \|p\|_{\Vhat} \|r\|_X \, dt \leq \frac{\|e(0)\|_Q^2+\|r(0)\|_X^2}{2L_C}<\infty\,.
$$

Now it remains to find an appropriate bound for the error of the ``unobserved'' part of the state, which can be done quite similarly. For this purpose we multiply (\ref{eq:errorsys_p}) with $p$ and use Assumption \ref{ass:equation} with (\ref{AbdV}) as well as (\ref{id1}) to gain 
\begin{eqnarray*}
\fl \frac{d}{dt}\frac{1}{2} \left[\| p \|^2_{X}\right]= (p_t,p)_X\\
\fl = -(PC(q^\ast,u^\ast+p)-PC(q^\ast,u^\ast),p)_X+(PA(u^\ast+p)e,p)_X - \left(\nu PNP \hat{u},p\right)_X\\
\fl \leq \|C(q^\ast,u^\ast)-C(q^\ast,u^\ast+p)\|_X \|p\|_X+\|A(u^\ast+p)\|_{Q\rightarrow X}\|e\|_Q\|p\|_X-\nu (PNP\hat{u},p)_X \\
\fl \leq L_C\|p\|_{\hat{V}}\|p\|_X+C_A(1+\|p\|_{\hat{V}})\|e\|_Q\|p\|_X -\nu (PN(p+Pu^\ast),p)_X 
\end{eqnarray*}
For the second and the last term we use Assumption \ref{ass:equation}, the embedding inequalities and Young's inequality to get 
$$
C_A\|e\|_Q\|p\|_X\leq \frac{C_A}{2}\left[\|e\|^2_Q+\|p\|^2_X\right] \leq \frac{C_A}{2}\|e\|^2_Q+\frac{C_A}{2}C_{\widehat{VX}X}C_{\hat{V}\widehat{VX}}\|p\|_X\|p\|_{\hat{V}},
$$
$$
-\nu (PNp,p)_X \leq -\nu c_N \|p\|^2_{\widehat{VX}}
$$
and 
\begin{equation}\label{est1}
\fl -\nu(PNPu^\ast,p)_X \leq \nu C_N \|Pu^\ast\|_\Vhat \|p\|_X \leq \nu \left(\frac{C_N^2 C^2_{\VXhat X}}{2 c_N}\|Pu^\ast\|^2_{\hat{V}}+\frac{c_N}{2}\|p\|^2_{\widehat{VX}}\right).
\end{equation}
So altogether we have
\begin{eqnarray}\label{estp}
\frac{d}{dt}\frac{1}{2} \left[\| p \|^2_{X}\right] 
\leq (L_C+C_A(\|e\|_Q+\frac{1}{2}C_{\widehat{VX}X}C_{\hat{V}\widehat{VX}}))\|p\|_X\|p\|_{\hat{V}} \\+\frac{C_A}{2}\|e\|^2_Q-\nu \frac{c_N}{2}\|p\|^2_{\widehat{VX}}+\nu \frac{C_N^2 C^2_{\widehat{VX}X}}{2 c_N}\|Pu^\ast\|^2_{\hat{V}}. \nonumber
\end{eqnarray}
This leads us to choose $\nu$ according to 
\begin{assumption}\label{cond:nu}
$$
\forall t>0 \ : \quad \nu(t)\geq \max\left\{\underline{\nu}\,,\ \frac{4(L_C+C_A(\|e(t)\|_Q+\frac{1}{2} C_{\Vhat\VXhat}C_{\VXhat X}))}{c_N} \frac{\|p(t)\|_\Vhat\|p(t)\|_X}{\|p(t)\|_\VXhat^2}\right\} 
$$
\end{assumption}
\noindent
to obtain
$$
\frac{d}{dt}\frac{1}{2} \left[\|p\|_X^2\right] 
\leq-\nu \frac{c_N}{4}\|p\|_{\widehat{VX}}^2 
+\nu \frac{C_N^2 C_{\widehat{VX} X}^2}{2c_N}\|Pu^\ast\|_{\hat{V}}^2+\frac{C_A}{2}\|e\|_Q^2.
$$
We now define $\tilde{\mathcal{V}}(\tau(t)):=\mathcal{V}(t)=\frac{1}{2}[\|p(t)\|^2_X]$ and $\tau(t):=\frac{c_N C^2_{\widehat{VX}X}}{2}\int^t_0 \nu (\xi) d\xi$ and hence $\frac{d\tau}{dt}=\frac{c_N C^2_{\widehat{VX}X}}{2}\nu (t)$.
Using the former estimate we get 
\begin{eqnarray*}
\frac{d}{d\tau}\tilde{\mathcal{V}}(\tau(t))=\frac{d}{dt}\mathcal{V}(t)\frac{1}{\frac{d\tau}{dt}}=\frac{d}{dt}\frac{1}{2}[\|p(t)\|^2_X]\frac{2}{c_N C^2_{\widehat{VX}X}\nu(t)} \\
 \leq -\frac{1}{2}\|p\|^2_{\widehat{VX}}\frac{1}{C^2_{\widehat{VX}X}}+\frac{C_N^2}{c_N^2}\|Pu^\ast\|^2_{\hat{V}}+\frac{C_A}{c_N C^2_{\widehat{VX}X}\nu}\|e\|^2_Q \\
 \leq -\tilde{\mathcal{V}}(\tau) +\frac{C_N^2}{c_N^2}\sup_{t>0}{\|Pu^\ast(t)\|^2_{\hat{V}}}+\frac{C_A}{c_N C^2_{\widehat{VX}X}\underline{\nu}}\sup_{t>0}{\|e(t)\|^2_Q}.
\end{eqnarray*}
Here we use the fact that for any differentiable nonnegative function $\eta : [0,T]\rightarrow \R^+_0$ and $a,b >0$ and for all $t\in [0,T]$ the following implication holds:
$$ \eta^{'}(t)\leq -a\eta(t)+b \Rightarrow \eta(t)\leq \frac{b}{a}+(\eta(0)-\frac{b}{a})e^{-at} \leq \max \left\{\frac{b}{a},\eta(0)\right\}.
$$
So with $a=1$ and $b=\frac{C_A}{c_N C^2_{\widehat{VX}X} \underline{\nu}}\sup_{t>0}{\|e(t)\|^2_Q}+\frac{C_N^2}{c_N^2}\sup_{t>0}{\|Pu^\ast (t)\|^2_{\hat{V}}}$ we get:
\begin{proposition}
\label{prop:welldefine}
Let Assumptions \ref{ass:equation} with (\ref{AbdV}), \ref{cond:mu_1}, and \ref{cond:nu} hold and let $(\hat{q}_0-q^\ast,\hat{u}_0-u_0)\in Q \times (\tilde{V}+\hat{V})$. Then there exists a solution $(\hat{q}(t),\hat{u}(t))\in Q \times (\tilde{V}+\hat{V})$ for all $t>0$ and the following estimates on the parameter and state errors (cf. (\ref{erp})) hold.
\begin{enumerate}
\item For all $t>0$: $\|e(t)\|^2_Q +\|r(t)\|^2_X \leq \|e(0)\|^2_Q +\|r(0)\|^2_X$;
\item For all $t>0$: 
$\|p(t)\|_{X} \leq \max\left\{\|p(0)\|_{X},\frac{C_A^2}{c_N C^2_{\widehat{VX}X} \underline{\nu}}\left(\|e(0)\|^2_Q +\|r(0)\|^2_X\right)+\frac{C_N^2}{c_N^2}\sup_{t>0}{\|Pu^\ast (t)\|^2_{\hat{V}}} \right\}$;
\item $\int^\infty_0 \|p(t)\|_{\hat{V}}\|r(t)\|_X dt \leq \frac{ \|e(0)\|^2_Q +\|r(0)\|^2_X}{L_C} < \infty$.
\end{enumerate}
\end{proposition}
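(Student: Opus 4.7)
The plan is to collect the a priori computations developed in the paragraphs above into a formal existence-plus-bounds statement, complemented by a standard local-existence-and-continuation argument in $Q\times(\Vtil+\Vhat)$. First I would produce a local solution on a maximal interval $[0,T^\ast)$ by a Picard / Cauchy--Lipschitz argument applied to (\ref{eq:est_exact_q})--(\ref{eq:est_exact0}): away from the zero set of $r = R\hat u - Ru^\ast$ the right-hand sides are locally Lipschitz in $(\hat q,\hat u)$ under Assumption \ref{ass:equation}, and the singular feedback $\mu RMr/\|r\|_\Vtil$ is handled by the standard convention of setting it to zero at $r=0$ (the energy decay obtained below will show that this extension is innocuous).

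Next, estimates (i) and (iii) are precisely what the chain of inequalities culminating in (\ref{align:neplusnr}) already delivers: testing (\ref{eq:errorsys_e}) with $e$ and (\ref{eq:errorsys_r}) with $r$ in the appropriate inner products, invoking the Lipschitz property (\ref{eq:LipschitzC}) and the coercivity/boundedness of $M$, and then applying Assumption \ref{cond:mu_1}, one obtains $\frac{d}{dt}(\|e\|_Q^2+\|r\|_X^2)\leq -2L_C\|p\|_\Vhat\|r\|_X\leq 0$. Integration on $[0,t]$ yields (i) at once, and integration on $[0,T]$ followed by $T\to\infty$ yields (iii). For (ii) I would combine (\ref{estp}) with Assumption \ref{cond:nu}, rescale time via $\tau(t)=\frac{c_N C_{\VXhat X}^2}{2}\int_0^t\nu(\xi)\,d\xi$, bound $\sup_t\|e(t)\|_Q^2$ via (i) by $\|e(0)\|_Q^2+\|r(0)\|_X^2$, and apply the elementary comparison lemma recorded just above the proposition to the rescaled function $\tilde{\mathcal V}$. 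Finally, the bounds (i)--(ii) keep the trajectory in a bounded subset of $Q\times(\Vtil+\Vhat)$ (and in particular in $\cB_\rho(u_0)$, possibly after enlarging $\rho$), so the standard continuation principle upgrades the local solution to $T^\ast=+\infty$.

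The main technical delicacy is the apparently circular coupling between Assumption \ref{cond:nu}, whose threshold on $\nu$ depends on the unknowns $e$ and $p$, and the bound on $\|p\|_X$ that we are trying to prove. What rescues the argument is the decoupling provided by (i): it controls $\sup_t\|e(t)\|_Q$ purely in terms of the initial data, so the forcing $b$ in the comparison lemma is independent of the $p$-dynamics, while the uniform lower bound $\underline\nu$ built into Assumption \ref{cond:nu} ensures that $\tau\colon[0,\infty)\to[0,\infty)$ is strictly increasing and surjective, making the time rescaling legitimate on the whole half line.
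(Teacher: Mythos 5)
Your treatment of the three estimates is exactly the paper's: the proof of Proposition \ref{prop:welldefine} in the paper is nothing but the computation preceding its statement --- testing (\ref{eq:errorsys_e}) and (\ref{eq:errorsys_r}) with $e$ and $r$ to reach (\ref{align:neplusnr}), absorbing the Lipschitz term via Assumption \ref{cond:mu_1} to get (i) and (iii), then testing (\ref{eq:errorsys_p}) with $p$ to reach (\ref{estp}), invoking Assumption \ref{cond:nu}, rescaling time by $\tau(t)=\frac{c_N C_{\VXhat X}^2}{2}\int_0^t\nu(\xi)\,d\xi$ and applying the stated comparison lemma to get (ii), with $\sup_t\|e(t)\|_Q^2$ controlled through (i). Your observation that (i) breaks the apparent circularity between Assumption \ref{cond:nu} and the bound on $p$ is correct and is implicitly the reason the paper orders the estimates as it does.

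Where you go beyond the paper is the local-existence-plus-continuation argument, and that part does not close as written. Estimates (i) and (ii) bound only $\|e\|_Q$, $\|r\|_X$ and $\|p\|_X$, i.e.\ they confine the trajectory to a bounded subset of $Q\times X$, not of $Q\times(\Vtil+\Vhat)$: since $\Vtil\subseteq\VXtil\subseteq X$ and $\Vhat\subseteq\VXhat\subseteq X$ are in general strict continuous embeddings, an $X$-bound does not rule out finite-time blow-up of $\|r\|_\Vtil$ or $\|p\|_\Vhat$, which is precisely what a continuation principle in $Q\times(\Vtil+\Vhat)$ must exclude (and what is needed to keep the term $\mu RM r/\|r\|_\Vtil$ and condition (\ref{eq:LipschitzC}) meaningful). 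Moreover, the Lipschitz condition of Assumption \ref{ass:equation} is only stated at the point $(q^\ast,u^\ast(t))$, so the local Lipschitz continuity in $\hat u$ that your Picard step invokes is not literally available from the hypotheses (in $\hat q$ it does follow from $C(q,u)=A(u)q+B(u)$ and (\ref{AbdV})). To be fair, the paper itself offers no existence proof at all --- it asserts existence alongside the a priori bounds --- so your attempt is a welcome addition, but it would need either additional regularity assumptions or a Galerkin-type construction to be made rigorous.
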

\subsubsection{State convergence}
In this section we will show that the estimated ``observed'' state converges towards the ``observed'' part of the exact solution.
For improving the state convergence we impose an additional lower bound on $\mu$ as compared to Assumption \ref{cond:mu_1} (note that therewith Proposition \ref{prop:welldefine} still remains valid). 
\begin{assumption}\label{cond:mu02}
There exists a constant $c_1>0$ such that for all $t>0$
$$
\mu(t)\geq \max\left\{
\frac{2L_C}{c_M} \|p(t)\|_{\Vhat}\,, \ c_1 \|r(t)\|_X\right\}
\frac{\|r(t)\|_X\|r(t)\|_{\Vtil} }{\|r(t)\|_{\VXtil}^2}.
$$
\end{assumption}
\begin{theorem}[State convergence]
\label{th:state}
Under Assumptions \ref{ass:equation} with (\ref{AbdX}), \ref{cond:nu}, and \ref{cond:mu02}
we have that $\left\|R(\hat{u}(t)-u^*(t))\right\|_X=\left\|r(t)\right\|_X\rightarrow 0$ as $t \rightarrow \infty$.
\end{theorem}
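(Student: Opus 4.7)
The plan is to exploit the new term in Assumption~\ref{cond:mu02}: it supplies an extra dissipation $\propto \|r\|_X^2$ that forces $\|r\|_X \in L^2([0,\infty))$, after which a one-sided Barbalat-type argument closes the proof. Concretely, I would first return to the calculation leading to (\ref{align:neplusnr}). The new bound on $\mu$ is a max of two quantities, and using $\max(a,b)\geq(a+b)/2$ I would extract
\[
c_M \mu\,\frac{\|r\|^2_{\widetilde{VX}}}{\|r\|_{\tilde{V}}} \geq L_C\|p\|_{\hat{V}}\|r\|_X+\frac{c_Mc_1}{2}\|r\|_X^2,
\]
so that (\ref{align:neplusnr}) sharpens to $\frac{d}{dt}(\|e\|_Q^2+\|r\|_X^2)\leq -c_Mc_1\|r\|_X^2$; integration then gives $\int_0^\infty \|r(t)\|_X^2\,dt<\infty$.

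Next I would bound $\frac{d}{dt}\|r\|_X^2$ from above. Multiplying only (\ref{eq:errorsys_r}) by $r$, using (\ref{AbdX}) on $\|A(u^\ast+p)\|_{Q\to X}$, and re-applying the same $\mu$-cancellation between the $L_C\|p\|_{\hat V}\|r\|_X$ term and the coercivity-of-$M$ term yields
\[
\frac{1}{2}\frac{d}{dt}\|r\|_X^2 \leq C_A(1+\|p\|_X)\|e\|_Q\|r\|_X - \frac{c_Mc_1}{2}\|r\|_X^2.
\]
Since Proposition~\ref{prop:welldefine} bounds $\|p\|_X$, $\|e\|_Q$, and $\|r\|_X$ uniformly in time, this gives $\frac{d}{dt}\|r\|_X^2\leq K$ for some $K>0$. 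A one-sided Barbalat-type argument then finishes: if $\|r(t_n)\|_X^2 \geq \epsilon$ along some $t_n\uparrow\infty$, the upper bound on the derivative forces $\|r(t)\|_X^2\geq \epsilon/2$ on each interval $[t_n-\epsilon/(2K),t_n]$, and extracting a subsequence with pairwise disjoint such intervals contradicts $\|r\|_X^2\in L^1([0,\infty))$.

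The main obstacle I foresee is exactly this upper bound in the middle step. Unlike in Proposition~\ref{prop:welldefine}, where the $A$-contribution cancels because one pairs $(e_t,e)_Q$ with $(r_t,r)_X$, here the cross-term $(A(u^\ast+p)e,r)_X$ must be retained. Controlling it requires both the stronger growth bound (\ref{AbdX})---which is precisely why the theorem is stated under that form of Assumption~\ref{ass:equation}(iv)---and the $X$-boundedness of $p$, $e$, $r$ supplied by Proposition~\ref{prop:welldefine}; the remaining terms either fall under the Lipschitz bound already handled by $\mu$ or are sign-definite and reinforce the estimate.
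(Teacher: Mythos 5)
Your proposal is correct and follows essentially the same route as the paper's proof: the strengthened lower bound on $\mu$ in Assumption \ref{cond:mu02} yields $\int_0^\infty\|r\|_X^2\,dt<\infty$, the bound (\ref{AbdX}) together with Proposition \ref{prop:welldefine} gives a uniform upper bound on $\frac{d}{dt}\|r\|_X^2$, and the one-sided Barbalat-type contradiction argument on disjoint intervals is exactly the paper's concluding step (stated there via the integral inequality (\ref{ineq:gamma})). The only cosmetic difference is that you keep the residual dissipation term $-\frac{c_Mc_1}{2}\|r\|_X^2$ in the derivative bound where the paper simply discards the negative terms.
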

\begin{proof}
We first take a look at the ``observed'' state error for $t_2>t_1>0$, for which we get from (\ref{eq:errorsys_r}) and (\ref{id1})  
\begin{eqnarray*}
\fl \|r(t_2)\|^2_X-\|r(t_1)\|^2_X =\int^{t_2}_{t_1}{\frac{d}{dt}\|r(t)\|^2_X}dt = \int^{t_2}_{t_1}{(r_t, r)_X}dt \\
\fl =\int^{t_2}_{t_1}{ \underbrace{\Big(R\left(C(q^\ast,u^\ast)-C(q^\ast,u^\ast+p)\Big),r\right)_X}_{(1)}-\underbrace{\Big(RA(u^\ast+p)e,r\Big)_X}_{(2)}-\underbrace{\left(\mu RM\frac{r}{\|r\|_{\tilde{V}}},r\right)_X}_{(3)}}dt
\end{eqnarray*}
where we have to estimate these terms appropriately.
By Assumption \ref{ass:equation} the second term $(2)$ can be estimated by
$$
|(R(C(q^\ast,u^\ast)-C(q^\ast,u^\ast+p)),r)_X|   \leq L_C \|p\|_{\hat{V}}\|r\|_X.
$$
Similarly for term $(3)$ we have with Assumption (\ref{ass:equation}) with (\ref{AbdX})
\begin{eqnarray*}
\fl |(RA(u^\ast+p)e,r)_X|\leq \left\|A(u^\ast+p)\right\|_{Q\rightarrow X}\|e\|_Q\left\|r\right\|_X \leq C_A(1+\sup_{t>0}\|p(t)\|_X)\|e\|_Q\|r\|_X \\
\fl \leq \frac{L_A}{2}(\|e\|^2_Q+\|r\|^2_X)
\end{eqnarray*}
with \begin{equation}L_A:=C_A(1+\sup_{t>0}\|p(t)\|_X), \label{def:LA}\end{equation}
which is finite by Proposition \ref{prop:welldefine}.
Using Assumptions \ref{ass:equation} and \ref{cond:mu02} we get for term $(1)$
$$
-\left(\mu RM\frac{r}{\|r\|_{\tilde{V}}},r\right)_X \leq -\frac{\mu}{\|r\|_{\tilde{V}}}c_M \|r\|^2_{\widetilde{VX}} \leq -2 L_C \|p\|_{\hat{V}}\|r\|_X.
$$
Altogether we have
\begin{eqnarray*}
\fl \|r(t_2)\|_X^2-\|r(t_1)\|_X^2  \leq \int^{t_2}_{t_1}{\left\{-L_C\|p(t)\|_{\hat{V}}\|r(t)\|_X+\frac{L_A}{2}(\|e(t)\|^2_Q+\|r(t)\|^2_X)\right\}}dt \\
\fl \leq \int^{t_2}_{t_1}{\frac{L_A}{2}(\|e(t)\|^2_Q+\|r(t)\|^2_X)}dt  \leq c_2(t_2-t_1)
\end{eqnarray*}
with $c_2:=\frac{L_A}{2}(\|e(0)\|_Q^2+\|r(0)\|^2_X)$, which follows from Proposition \ref{prop:welldefine}.
Using this estimate we get for any $t$, $\gamma>0$ fixed
\begin{eqnarray*}
\fl \gamma \|r(t)\|^2_X =\int^t_{t-\gamma}\{\|r(\tau)\|^2_X+(\|r(t)\|^2_X - \|r(\tau)\|^2_X )\}d\tau \\ \fl \leq\int^t_{t-\gamma}{\|r(\tau)\|^2_X}d\tau +c_2\int^t_{t-\gamma}{(t-\tau)}d\tau 
 =\int^t_{t-\gamma}{\|r(\tau)\|^2_X}d\tau+c_2\frac{\gamma^2}{2} \,.
\end{eqnarray*}
Hence we have for all $t,\gamma >0$ that
\begin{equation}
\label{ineq:gamma}
\int^{t}_{t-\gamma}\|r(\tau)\|^2_X \geq \gamma \|r(t)\|^2_X-\frac{c_2 \gamma^2}{2} \,.
\end{equation}
From (\ref{align:neplusnr}) and choosing $\mu$ according to Assumption \ref{cond:mu02} we get
\begin{eqnarray}
\fl \frac{1}{2} \left[\|e(t)\|_Q^2+\|r(t)\|_X^2\right]^{t_2}_{t_1} \leq  -\int^{t_2}_{t_1}{\left\{-L_C \|p(t)\|_{\hat{V}}\|r(t)\|_X+c_M \mu \frac{\|r(t)\|^2_{\widetilde{VX}}}{\|r(t)\|_{\tilde{V}}}\right\}}dt \nonumber \\
\fl \leq -\int^{t_2}_{t_1} \mu \frac{c_M}{2} \frac{\|r(t)\|_{\VXtil}^2}{\|r(t)\|_{\Vtil} }  dt \leq -\frac{c_M c_1}{2}\int_{t_1}^{t_2} \|r(t)\|_X^2   dt\,, \label{eq:est_er1}
\end{eqnarray}
hence
\begin{equation}\label{eq:finiteintegral_r2}
\int_0^\infty \|r\|_X^2 \, dt \leq \frac{\|e(0)\|_Q^2+\|r(0)\|_X^2}{c_Mc_1}<\infty\,.
\end{equation}

We want to show that $\lim_{t\rightarrow \infty}{\|r(t)\|}=0$.
So we suppose that $\lim_{t\rightarrow \infty}{\|r(t)\|}\neq 0$. If this is the case then there exists a sequence $(t_i)_{i \in \N}$ with $t_i \rightarrow \infty$ for $i\rightarrow \infty$, and 
 an $\varepsilon >0$ such that for all $i\in\N$ $\|r(t_{i})\|^2_X \geq \varepsilon$.
Now we select a subsequence $(t_{i_{j}})_{j\in\N}$ such that for all $j\in\N$ we additionally have $t_{i_{j}}-t_{i_{j-1}} \geq \frac{\varepsilon}{c_2}$.
Because of inequality (\ref{ineq:gamma}), choosing $\gamma=\frac{\varepsilon}{c_2}$
 we have
$$
\frac{\varepsilon^2}{2c_2} \leq \int^{t_{i_{j}}}_{t_{i_{j}}-\gamma}\|r(\tau)\|^2_X d\tau 
$$
By summing up on both sides and using $t_{i_{j}}-\gamma\geq t_{i_{j}}-\frac{\varepsilon}{c_2}\geq t_{i_{j-1}}$ we get for all $n \in \N$
$$
n \, \frac{\varepsilon^2}{2c_2} \leq 
\sum^{n}_{j=1} \int^{t_{i_{j}}}_{t_{i_{j}}-\gamma} \|r(\tau)\|_X^2 d\tau 
 \leq  \int^{t_{i_{n}}}_0 \|r(\tau)\|_X^2 d\tau \\
 \leq \int^\infty_0 \|r(\tau)\|_X^2 d\tau\,,
$$
which gives a contradiction to (\ref{eq:finiteintegral_r2}).
\end{proof}
\subsubsection{Parameter convergence}
The proofs in this section are to some extent similar to those in Section 3 of \cite{Kuegler10}. Note however, that the Lemma quantifying the relation between state error and parameter error can be stated in a stronger manner (cf. Lemma \ref{lemma1} below), which enables to considerably simplify the final convergence proof, see Theorem \ref{th:parameter} below. 
In order to show that the parameter error converges to zero we start with some preparatory results. First we prove an estimate on the norm of the ``observed'' state error.
\begin{lemma}\label{lem:est_state}
Under Assumption \ref{ass:equation} with (\ref{AbdX}), the projected state errors $r=R(\hat{u}-u^*)$ and $p=P(\hat{u}-u^*)$ satisfy the following relation for all $0 \, <t_a\leq t_b\leq t_c$
\begin{eqnarray*}
\fl \|r(t_c)\|_X \geq
\|\int^{t_c}_{t_b} RA(u^\ast(\tau)+p(\tau))e(t_a)d\tau \|_X\\
\fl -\|r(t_b)\|_X
-L_A^2\int_{t_b}^{t_c}\left\{\int_{t_a}^\tau \|r(\sigma)\|_X\, d\sigma\right\}d\tau 
-L_C \int_{t_b}^{t_c}\|p(\tau)\|_\Vhat\, d\tau
-C_M \int_{t_b}^{t_c} \mu(\tau)\, d\tau.
\end{eqnarray*}
\end{lemma}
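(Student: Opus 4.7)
The plan is to integrate the error equation for $r$ on $[t_b,t_c]$, solve for the integral of $RA(u^\ast+p)e(\tau)$, then convert $e(\tau)$ to the frozen value $e(t_a)$ by integrating the error equation for $e$ on $[t_a,\tau]$. Reverse triangle inequality combined with Lipschitz and boundedness estimates from Assumption \ref{ass:equation} will yield the claimed lower bound on $\|r(t_c)\|_X$.

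Concretely, first integrating (\ref{eq:errorsys_r}) from $t_b$ to $t_c$ and solving for the $A$-term gives
\begin{eqnarray*}
\int_{t_b}^{t_c} RA(u^\ast+p)e(\tau)\, d\tau &=& r(t_b)-r(t_c) -\int_{t_b}^{t_c}\bigl(RC(q^\ast,u^\ast+p)-RC(q^\ast,u^\ast)\bigr)\, d\tau\\
&& -\int_{t_b}^{t_c}\mu\, RM\frac{r}{\|r\|_\Vtil}\, d\tau.
\end{eqnarray*}
Next, I would replace $e(\tau)$ by $e(t_a)$ using the identity $e(\tau)=e(t_a)+\int_{t_a}^\tau A(u^\ast(\sigma)+p(\sigma))^\ast r(\sigma)\, d\sigma$ coming from (\ref{eq:errorsys_e}), and move all remainder terms to one side. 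Taking norms and applying reverse triangle inequality in the form $\|r(t_c)\|_X \geq \|\cdots\|_X - \|r(t_b)\|_X - (\text{remainders})$ produces the stated lower bound provided each remainder is controlled by the asserted quantity.

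The estimates on the four remainder terms are the routine part: (i) $\|RC(q^\ast,u^\ast+p)-RC(q^\ast,u^\ast)\|_X\leq L_C\|p\|_\Vhat$ by the Lipschitz property (\ref{eq:LipschitzC}) and $\|R\|\leq 1$; (ii) $\bigl\|RM\,r/\|r\|_\Vtil\bigr\|_X\leq C_M$ directly from the $C_M$-bound in Assumption \ref{ass:equation}(5); (iii) for the substitution error,
\[
\Bigl\|\int_{t_b}^{t_c}RA(u^\ast+p)\bigl(e(\tau)-e(t_a)\bigr)\, d\tau\Bigr\|_X
\leq L_A\int_{t_b}^{t_c}\|e(\tau)-e(t_a)\|_Q\, d\tau,
\]
where $L_A$ is the bound from (\ref{def:LA}), valid under (\ref{AbdX}) and Proposition \ref{prop:welldefine}; (iv) using (\ref{eq:errorsys_e}) once more, $\|e(\tau)-e(t_a)\|_Q\leq L_A\int_{t_a}^\tau \|r(\sigma)\|_X\, d\sigma$, giving the double integral with constant $L_A^2$.

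The only point that requires a little care is the chain in step (iv): one has to observe that the norm $\|A(u^\ast+p)^\ast\|_{X\to Q}=\|A(u^\ast+p)\|_{Q\to X}$ is controlled uniformly by $L_A$ (finite by Proposition \ref{prop:welldefine} under the stronger assumption (\ref{AbdX})), so that the same constant $L_A$ appears twice and the product $L_A^2$ emerges. No other obstacle arises; the remainder is bookkeeping with the triangle inequality.
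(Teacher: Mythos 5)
Your proposal is correct and follows essentially the same route as the paper's proof: integrate (\ref{eq:errorsys_r}) over $[t_b,t_c]$, isolate the $A$-term via the triangle and reverse triangle inequalities, bound the Lipschitz and $RM$ terms by $L_C\|p\|_\Vhat$ and $C_M\mu$, and replace $e(\tau)$ by $e(t_a)$ using (\ref{eq:errorsys_e}) so that the operator bound $L_A$ from (\ref{def:LA}) appears twice and yields the $L_A^2$ double integral. No gaps.
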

\begin{proof}
Integrating identity (\ref{eq:errorsys_r}) with respect to time and using (\ref{id1}) we obtain
\begin{eqnarray*}
\fl r(t_c)-r(t_b)=\int^{t_c}_{t_b} r_t(\tau) d\tau \\
\fl =\int^{t_c}_{t_b}\left\{R(C(q^\ast,u^\ast)-C(q^\ast,u^\ast+p))-RA(u^\ast+p)e-\mu RM\frac{r}{\|r\|_{\Vtil}}\right\}d\tau \\
\end{eqnarray*}
Taking the norm we get, using the triangle inequality and the reverse triangle inequality,
\begin{eqnarray*}
\fl \|r(t_c)\|_X+\|r(t_b)\|_X \geq \|r(t_c)-r(t_b)\|_X \\
\fl \geq \|\int^{t_c}_{t_b}{RA(u^\ast+p)e}d\tau \|_X-\int^{t_c}_{t_b}{\|RC(q^\ast,u^\ast)-RC(q^\ast,u^\ast+p)\|_X}d\tau\\
\fl -\int^{t_c}_{t_b}{\|\mu RM\frac{r}{\|r\|_{\tilde{V}}}\|_X}d\tau \\
\fl \geq\|\int^{t_c}_{t_b}{RA(u^\ast+p)e}d\tau\|_X-\int^{t_c}_{t_b}{L_C \|p\|_{\hat{V}}}d\tau-\int^{t_c}_{t_b}{C_M \mu}d\tau
\end{eqnarray*}
where we have used Assumption \ref{ass:equation}.
Now we have to estimate the remaining first term on the right hand side.
With Assumption \ref{ass:equation} as well as $L_A$ as in (\ref{def:LA}) we get 
\begin{eqnarray*}
\fl \|\int^{t_c}_{t_b} RA(u^\ast(\tau)+p(\tau))e(\tau) d\tau\|_X \\
\fl =\|\int^{t_c}_{t_b} RA(u^\ast(\tau)+p(\tau))(e(t_a)+e(\tau)-e(t_a))d\tau \|_X \\
\fl \geq \|\int^{t_c}_{t_b} RA(u^\ast(\tau)+p(\tau))e(t_a)d\tau \|_X-\int^{t_c}_{t_b}{\|A(u^\ast+p)\|_{Q\rightarrow X}\|e(\tau)-e(t_a)\|_Q}d\tau \\
\fl \geq \|\int^{t_c}_{t_b}{RA(u^\ast(\tau)+p(\tau))e(t_a)}d\tau\|_X-\int^{t_c}_{t_b}{L_A^2\int^{\tau}_{t_a}{\|r(\sigma)\|_X}d\sigma}d\tau 
\end{eqnarray*}
where  we used the fact that with $\tau\geq t_a$ and (\ref{eq:errorsys_e})
$$
\fl \|e(\tau)-e(t_a)\|_Q =\| \int^{\tau}_{t_a} A(u^\ast+p)^{\ast}r d\sigma \|_Q 
\leq L_A\int^{\tau}_{t_a}\|r(\sigma)\|_X d\sigma . 
$$
Combining everything yields the assertion.
\end{proof}
Consider the right hand side in the estimate of Lemma \ref{lem:est_state}. While by Theorem \ref{th:state}, the negative terms containing $r$ will tend to zero as time tends to infinity, the first (positive) term enables us to enforce parameter convergence by means of a so-called persistence of excitation condition.
\begin{assumption}[Persistence of Excitation]\label{asspof}
There are $T_0,\varepsilon_0,\gamma_0,\underline{t}>0$ such that for all $t_a \geq \underline{t}$,  $\xi\in\partial B^Q_1(0)$ there exists a time instance $t_b\in[t_a,t_a+T_0]$ such that 
$$\|\int^{t_b+\gamma_0}_{t_b}RA(u^\ast(\tau)+p(\tau))\xi d\tau \|_X \geq \varepsilon_0\,.$$
\end{assumption}
To control the remaining terms 
$-L_C \int_{t_b}^{t_c}\|p(\tau)\|\, d\tau$ and
$-C_M \int_{t_b}^{t_c} \mu(\tau)\, d\tau$
on the right hand side of the estimate in Lemma \ref{lem:est_state}, we will combine the estimate 
\begin{equation}
\label{eq:est_er2}
\frac{1}{2} \left[\|e\|_Q^2+\|r\|_X^2\right]_{t_1}^{t_2}\leq-\frac{c_M}{2}\int^{t_2}_{t_1}{\mu \frac{\|r\|^2_{\widetilde{VX}}}{\|r\|_{\tilde{V}}}}d\tau =-\frac{c_M}{2}\int_{t_1}^{t_2}  \theta(\tau) \, d\tau,
\end{equation}
where
$$
\theta=\mu \frac{\|r\|_{\VXtil}^2}{\|r\|_{\Vtil} },
$$
resulting from (\ref{eq:est_er1}) 
with some link conditions
\begin{assumption}[Link conditions]\label{cond:link}
There exist $\lambda,\kappa\in[1,\infty)$, $T_\lambda,T_\kappa>0$ and $C_\lambda,C_\kappa>0$ such that for $\gamma_0$ as in Assumption \ref{asspof} the following holds.\\
For all $t\geq T_\lambda$  
$$C_\lambda\geq
\cases
{\left(\int_{t}^{t+\gamma_0}\left(\frac{\|p(\tau)\|_\Vhat^\lambda}{\theta(\tau)}\right)^{\frac{1}{\lambda-1}}\, d\tau\right)^{\frac{\lambda-1}{\lambda}}\quad \mbox{ if }\lambda>1\\
\sup_{\tau\in[t,t+\gamma_0]} \frac{\|p(\tau)\|_\Vhat}{\theta(\tau)}  \quad \mbox{ if }\lambda=1.}
$$
For all $t\geq T_\kappa$  
$$C_\kappa\geq
\cases{
\left(\int_{t}^{t+\gamma_0}\left(\frac{\mu(\tau)^\kappa}{\theta(\tau)}\right)^{\frac{1}{\kappa-1}}\, d\tau\right)^{\frac{\kappa-1}{\kappa}}\quad \mbox{ if }\kappa>1\\
\sup_{\tau\in[t,t+\gamma_0]} \frac{\mu(\tau)}{\theta(\tau)} \quad \quad \mbox{ if }\kappa=1.}
$$
\end{assumption}
\begin{remark}
Sufficient for Assumption \ref{cond:link} is the existence of some $\rho>0$ and a constant $C_{\rho}$ such that for all $t>0$
$$
\|p\|_{\hat{V}}\leq C_{\rho} \|r\|^\rho_X
$$
and existence of constants $c_{int}$ and $C_{int}$ respectively $c_\mu$ and $C_\mu$ such that for all $t>0$ the following interpolation estimate 
\begin{equation}\label{interpol}
c_{int}\|r\|_{\tilde{V}}\|r\|_X\leq \|r\|^2_{\widetilde{VX}} \leq C_{int}\|r\|_{\tilde{V}} \|r\|_X.
\end{equation}
and also the connecting estimate of $r$ and $\mu$ 
$$
c_\mu \mu \leq \|r\|_X^{\frac{1}{\kappa-1}}\leq C_\mu \mu
$$
holds. This can be seen as follows. 
\newline Since we want to estimate the integral $(\int^{t+\gamma_0}_t{(\frac{\mu(\tau)^\kappa}{\theta(\tau)})^{\frac{1}{\kappa-1}}}d\tau)^{\frac{\kappa -1}{\kappa}}$ we first take a look at the integrand.
Using the definition of $\theta$ and the stated interpolation estimate for the state error as well as the connecting estimate of $r$ and $\mu$ we get
$$
\left(\frac{\mu^\kappa}{\theta}\right)^{\frac{1}{\kappa-1}} \leq  \mu \left(\frac{1}{c_{int}\|r\|_X}\right)^{\frac{1}{\kappa-1}} \leq\frac{1}{c_{int}^{\frac{1}{\kappa-1}}c_{\mu}}
$$
and so the integral is
$$
\left(\int^{t+\gamma_0}_{t}{\left(\frac{\mu(\tau)^\kappa}{\theta(\tau)}\right)^{\frac{1}{\kappa-1}}}d\tau \right)^\frac{\kappa-1}{\kappa} \leq \left(\int^{t+\gamma_0}_{t}{\frac{1}{c_{int}^{\frac{1}{\kappa-1}}c_\mu}}d\tau \right)^{\frac{\kappa-1}{\kappa}}=\left(\frac{1}{c_{int}}\right)^{\frac{1}{\kappa}}\left(\frac{\gamma_0}{c_{\mu}}\right)^{\frac{\kappa-1}{\kappa}} \, \leq const.
$$
The second integral $\left(\int^{t+\gamma_0}_{t}{\left(\frac{\|p(\tau)\|^\lambda_{\hat{V}}}{\theta(\tau)}\right)^{\frac{1}{\lambda-1}}}d\tau \right)^{\frac{\lambda-1}{\lambda}}$ can be estimated similarly. Again using the definition of $\theta$ and the estimates stated in the remark yields 
$$
\frac{\|p\|^\lambda_{\hat{V}}}{\theta} \leq \frac{1}{c_{int}}\frac{\|p\|^\lambda_{\hat{V}}}{\mu \|r\|_X} \leq \frac{C_\mu C^\lambda_\rho}{c_{int}}\|r\|_X^{\lambda \rho -\frac{\kappa}{\kappa-1}}.
$$
Therewith the integral is bounded, using Proposition \ref{prop:welldefine}
\begin{eqnarray*}
\fl \left(\int^{t+\gamma_0}_{t}{\left(\frac{\|p(\tau)\|^\lambda_{\hat{V}}}{\theta(\tau)}\right)^{\frac{1}{\lambda-1}}}d\tau \right)^{\frac{\lambda-1}{\lambda}} \\
\leq \left(\frac{C_\mu}{c_{int}}\right)^\frac{1}{\lambda}C_\rho\left(\int^{t+\gamma_0}_{t}{\left(\|r(\tau)\|_X^{\lambda \rho -\frac{\kappa}{\kappa-1}}\right)^\frac{1}{\lambda-1}}d\tau \right)^\frac{\lambda-1}{\lambda} \leq const
\end{eqnarray*}
provided $\lambda \geq \frac{\kappa}{\rho(\kappa-1)}$.

A possible choice for $\kappa$ and $\lambda$ is to take $\lambda=\frac{\kappa}{(\kappa-1)\rho}$ and $\kappa=\max\{1+\frac{1}{\rho},2\} $, which arises from Assumption \ref{cond:mu02}.
\end{remark}
With these assumptions we can state the next lemma.
\begin{lemma}
\label{lemma1}
Let Assumptions \ref{ass:equation} with (\ref{AbdX}), \ref{cond:nu}, \ref{cond:mu02}, \ref{asspof}, and \ref{cond:link} hold.\\
Then, for any given $\gamma>0$, there are $\varepsilon>0$, $T>0$ and $T_1>0$ such that for all $t_1\geq T_1$ the following holds true:\\
If the parameter error $\|e(t_1)\|_Q \geq \gamma$, then there exists a $t_2\in [t_1,t_1+T]$ such that the state error $\|r(t_2)\|_X\geq \varepsilon$.
\end{lemma}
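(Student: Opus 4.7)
The plan is to argue by contradiction. Fix $\gamma>0$ and set $T:=T_0+\gamma_0$. Assume, for some $t_1\ge T_1$ (with $T_1\ge\max\{\underline{t},T_\lambda,T_\kappa\}$ still to be enlarged), that $\|e(t_1)\|_Q\ge\gamma$ while $\|r(\sigma)\|_X<\varepsilon$ for every $\sigma\in[t_1,t_1+T]$; the goal is to contradict this by choosing $\varepsilon$ sufficiently small and $T_1$ sufficiently large, depending only on $\gamma$. The leverage point is Lemma \ref{lem:est_state}: its leading positive term can be forced to size $\varepsilon_0\gamma$ via persistence of excitation, while every negative term can be driven below this threshold.

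Concretely, apply Assumption \ref{asspof} with $t_a:=t_1$ and $\xi:=e(t_1)/\|e(t_1)\|_Q$ to obtain $t_b\in[t_1,t_1+T_0]$ with $\bigl\|\int_{t_b}^{t_b+\gamma_0} RA(u^\ast+p)\xi\,d\tau\bigr\|_X\ge\varepsilon_0$. Set $t_c:=t_b+\gamma_0\in[t_1,t_1+T]$. Linearity of $A(u)$ in its parameter argument then gives $\bigl\|\int_{t_b}^{t_c} RA(u^\ast+p)e(t_1)\,d\tau\bigr\|_X\ge\varepsilon_0\gamma$ for the leading term in Lemma \ref{lem:est_state}. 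The contradiction hypothesis directly yields $\|r(t_b)\|_X<\varepsilon$ and $L_A^2\int_{t_b}^{t_c}\int_{t_a}^{\tau}\|r(\sigma)\|_X\,d\sigma\,d\tau\le L_A^2\gamma_0 T\varepsilon$. For the remaining integrals, H\"older's inequality combined with Assumption \ref{cond:link} produces
\begin{eqnarray*}
\int_{t_b}^{t_c}\|p(\tau)\|_{\Vhat}\,d\tau \le C_\lambda \Bigl(\int_{t_b}^{t_c}\theta(\tau)\,d\tau\Bigr)^{1/\lambda}, \quad \int_{t_b}^{t_c}\mu(\tau)\,d\tau \le C_\kappa \Bigl(\int_{t_b}^{t_c}\theta(\tau)\,d\tau\Bigr)^{1/\kappa},
\end{eqnarray*}
with $\lambda=1$ and $\kappa=1$ handled directly via the pointwise sup bounds. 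Inequality (\ref{eq:est_er2}) together with Proposition \ref{prop:welldefine} forces $\int_0^\infty\theta\,d\tau<\infty$, so $\eta(t_1):=\int_{t_b}^{t_c}\theta\,d\tau\to0$ uniformly in $t_b\in[t_1,t_1+T_0]$ as $t_1\to\infty$.

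Assembling everything in Lemma \ref{lem:est_state} yields
\begin{eqnarray*}
\|r(t_c)\|_X \ge \varepsilon_0\gamma - \varepsilon\bigl(1+L_A^2\gamma_0 T\bigr) - L_C C_\lambda \eta(t_1)^{1/\lambda} - C_M C_\kappa \eta(t_1)^{1/\kappa}.
\end{eqnarray*}
First choose $\varepsilon>0$ so small that $\varepsilon(2+L_A^2\gamma_0 T)\le\varepsilon_0\gamma/2$; then enlarge $T_1$ so that $L_CC_\lambda\eta(t_1)^{1/\lambda}+C_MC_\kappa\eta(t_1)^{1/\kappa}<\varepsilon_0\gamma/4$ for every $t_1\ge T_1$. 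Then $\|r(t_c)\|_X>\varepsilon$, which contradicts the hypothesis, and $t_2:=t_c\in[t_1,t_1+T]$ realises the desired lower bound. The main obstacle is the ordering of the parameter choices: $\varepsilon$, $\eta$ and $T_1$ must each be fixed so as to depend only on previously fixed constants and ultimately only on $\gamma$, and the link condition must be invoked uniformly across the two edge cases $\lambda=1$ and $\kappa=1$, where the pointwise bound replaces the H\"older step.
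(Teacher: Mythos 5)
Your proposal is correct and rests on the same skeleton as the paper's proof: apply Lemma \ref{lem:est_state} with $t_a=t_1$, $t_c=t_b+\gamma_0$, use persistence of excitation with $\xi=e(t_1)/\|e(t_1)\|_Q$ to force the leading term to be $\geq\varepsilon_0\gamma$, and control the two link-condition integrals by H\"older's inequality via $\int\theta$. Where you diverge is in how the remaining negative terms are made small, and your route is in fact a little leaner. The paper argues directly (no contradiction): it sets $\varepsilon=\varepsilon_0\gamma/2$ from the outset, bounds the double integral by $\gamma_0 T\sup_{\sigma\geq t_1}\|r(\sigma)\|_X$, and invokes Theorem \ref{th:state} to make $\|r(t_b)\|_X$, $\sup_{\sigma\geq t_1}\|r(\sigma)\|_X$ and (after an extra computation expressing $\|e(t_b)\|_Q^2-\|e(t_b+\gamma_0)\|_Q^2$ through $\int\|r\|_X$ via (\ref{eq:errorsys_e})) the quantity $\int_{t_b}^{t_b+\gamma_0}\theta$ all small for $t_1\geq\bar t$. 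You instead (a) run a contradiction argument and absorb the $r$-dependent terms directly into the hypothesis $\|r\|_X<\varepsilon$ on the window, and (b) bound $\int_{t_b}^{t_b+\gamma_0}\theta$ by the tail of the convergent integral $\int_0^\infty\theta\,d\tau\leq\frac{1}{c_M}(\|e(0)\|_Q^2+\|r(0)\|_X^2)$, which follows from (\ref{eq:est_er2}) and Proposition \ref{prop:welldefine}. This avoids both the appeal to Theorem \ref{th:state} and the energy computation for $\|e\|_Q^2$; the price is the slightly more delicate bookkeeping in the order of choosing $\varepsilon$ and $T_1$, which you handle correctly (the margin $\varepsilon(2+L_A^2\gamma_0T)\leq\varepsilon_0\gamma/2$ indeed leaves $\|r(t_c)\|_X>\varepsilon$ strictly). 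Both arguments are valid; yours trades a forward reference to the state-convergence theorem for the integrability of $\theta$, which is already implicit in the well-definedness estimates.
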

\begin{proof}
We choose $T_0$, $\varepsilon_0$, $\gamma_0$, $\underline{t}>0$ according to Assumption \ref{asspof}, fix $\gamma >0$ arbitrarily, set $T_1 =max \left\{\underline{t},\bar{t} \right\}$ and assume that $t_1 > T_1$ and $\|e(t_1)\|_Q > \gamma$.
(Here $\bar{t}$ will be chosen sufficiently large below.)  Setting $\xi=\frac{e(t_1)}{\|e(t_1)\|_Q}$ we can choose $t_b$ according to Assumption \ref{asspof}. Now we use Lemma \ref{lem:est_state} with $t_a=t_1$, $t_c=t_b+\gamma_0$, and set $t_2=t_c$ and $T=T_0+\gamma_0$ (i.e. $t_a=t_1 \leq t_b \leq t_b+\gamma_0 =t_2 =t_c \leq t_1+T=t_a+T$) to obtain
\begin{eqnarray*}
\fl \|r(t_2)\|_X =\|r(t_b+\gamma_0)\|_X\geq
\|\int^{t_b+\gamma_0}_{t_b} RA(u^\ast+p)\frac{e(t_1)}{\|e(t_1)\|_Q}d\tau\|_X \,\|e(t_1)\|_Q\\
\fl -\|r(t_b)\|_X
-L_A^2\int_{t_b}^{t_b+\gamma_0}\int_{t_1}^\tau \|r(\sigma)\|_X\, d\sigma d\tau 
-L_C \int_{t_b}^{t_b+\gamma_0}\|p(\tau)\|_\Vhat\, d\tau
-C_M \int_{t_b}^{t_b+\gamma_0} \mu(\tau)\, d\tau \\
\fl \geq \varepsilon_0 \|e(t_1)\|_Q-\|r(t_b)\|_X -L_A^2\int^{t_b+\gamma_0}_{t_b}{\int^{\tau}_{t_1}{\|r(\sigma)\|_X}d\sigma}d\tau \\
\fl -L_C\int^{t_b+\gamma_0}_{t_b}{\|p(\tau)\|_{\hat{V}}}d\tau-C_M\int^{t_b+\gamma_0}_{t_b}{\mu(\tau)}d\tau. 
\end{eqnarray*}
The last three terms remain to be estimated.
$$
\int^{t_b+\gamma_0}_{t_b}{\int^{\tau}_{t_1}{\|r(\sigma)\|_X}d\sigma}d\tau  \leq \int^{t_b+\gamma_0}_{t_b}{\int^{\tau}_{t_1}{\sup_{\sigma \geq t_1}\|r(\sigma)\|_X}d\sigma}d\tau \leq \gamma_0 T \sup_{\sigma \geq t_1}{\|r(\sigma)\|_X}.
$$
Estimating by H\"older's inequality and using the link conditions results in
$$
\int_{t_b}^{t_b+\gamma_0}\|p(\tau)\|\, d\tau =\int^{t_b+\gamma_0}_{t_b}{\frac{\|p(\tau)\|_{\hat{V}}}{\theta(\tau)^{\frac{1}{\lambda}}}\theta^{\frac{1}{\lambda}}(\tau)}d\tau \leq C_\lambda \left(\int^{t_b+\gamma_0}_{t_b}{\theta(\tau)}d\tau\right)^\frac{1}{\lambda}
$$
and analogously for the last term
$$
\int^{t_b+\gamma_0}_{t_b}{\mu(\tau)}d\tau =\int^{t_b+\gamma_0}_{t_b}{\frac{\mu(\tau)}{\theta(\tau)^{\frac{1}{\kappa}}}\theta^{\frac{1}{\kappa}}(\tau)}d\tau \leq C_\kappa \left(\int^{t_b+\gamma_0}_{t_b}\theta(\tau) d\tau\right)^\frac{1}{\kappa}.
$$
Now using (\ref{eq:est_er2}) we can estimate the term $\int_{t_b}^{t_b+\gamma_0} \theta(\tau)\, d\tau$  as follows
$$
\int_{t_b}^{t_b+\gamma_0}  \theta(\tau) \, d\tau  \leq -\frac{1}{c_M}[\|e\|^2_Q+\|r\|^2_X]^{t_b+\gamma_0}_{t_b}  \leq \frac{1}{c_M} (\|e(t_b)\|_Q^2-\|e(t_b+\gamma_0)\|_Q^2)+\frac{1}{c_M}\|r(t_b)\|_X^2 .
$$
At this point we utilize (\ref{eq:errorsys_e}), Assumptions \ref{ass:equation} and \ref{def:LA} as well as Proposition \ref{prop:welldefine}
\begin{eqnarray*}
\fl \|e(t_b)\|^2_Q-\|e(t_b+\gamma_0)\|^2_Q  = [\|e(t)\|^2_Q ]^{t_b}_{t_b+\gamma_0}=-2\int^{t_b+\gamma_0}_{t_b}{(e_t,e)_Q}d\tau  \\
\fl \leq 2\int^{t_b+\gamma_0}_{t_b}{\|e_t\|_Q\|e\|_Q}d\tau  =2\int^{t_b+\gamma_0}_{t_b}{\|A(u^\ast+p)^\ast r\|_Q \|e\|_Q}d\tau \\ \fl \leq 2L_A \int^{t_b+\gamma_0}_{t_b}{\|r(\tau)\|_X \|e(\tau)\|_Q}d\tau 
\leq 2L_A \int^{t_b+\gamma_0}_{t_b}{\|r(\tau)\|_X}d\tau \sqrt{\|e(0)\|^2_Q+\|r(0)\|^2_X}.
\end{eqnarray*}
Hence altogether we end up with 
\begin{eqnarray*}
\fl \|r(t_2)\|_X \geq
\epsilon_0\gamma-\|r(t_b)\|_X
-L_A^2\gamma_0 T \sup_{\sigma\geq t_1} \|r(\sigma)\|_X\\
\fl -L_C C_\lambda \left(\frac{2L_A}{c_M} \int_{t_b}^{t_b+\gamma_0} \|r(\tau)\|_X d\tau \sqrt{\|e(0)\|_Q^2+\|r(0)\|_X^2}
+\frac{1}{c_M}\|r(t_b)\|_X^2\right)^{\frac{1}{\lambda}} \\
\fl -C_M C_\kappa \left(\frac{2 L_A}{c_M} \int_{t_b}^{t_b+\gamma_0} \|r(\tau)\|_X d\tau \sqrt{\|e(0)\|_Q^2+\|r(0)\|_X^2}
+\frac{1}{c_M}\|r(t_b)\|_X^2\right)^{\frac{1}{\kappa}}.
\end{eqnarray*}
By Theorem \ref{th:state} for $\bar{t}$ sufficiently large, $t_b, \, t_1, \, \tau\geq T_1\geq\bar{t}$ the sum of all negative terms will be contained in the interval  $[-\frac{\epsilon_0\gamma}{2},0]$, so that we get 
$$
\|r(t_2)\|_X \geq \epsilon_0 \gamma - \frac{\epsilon_0\gamma}{2}=\frac{\epsilon_0\gamma}{2}\,.
$$ 
With $\epsilon=\frac{\epsilon_0\gamma}{2}$, this implies the assertion.
\end{proof}
\begin{theorem}[Parameter convergence]
\label{th:parameter}
Under Assumptions \ref{ass:equation} with (\ref{AbdX}), \ref{cond:nu}, \ref{cond:mu02}, \ref{asspof} and \ref{cond:link}
we have that 
$$
\left\|\hat{q}(t)-q^*\right\|_Q\rightarrow 0\mbox{ as }t \rightarrow \infty\,.
$$
\end{theorem}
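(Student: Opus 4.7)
The plan is a contradiction argument built on top of Theorem~\ref{th:state} (state convergence) and Lemma~\ref{lemma1} (the quantitative dichotomy between parameter error and state error). The first step is to upgrade Proposition~\ref{prop:welldefine}(i) from a mere boundedness statement to a monotonicity statement: re-examining the estimate that precedes that proposition shows
\[
\frac{1}{2}\bigl[\|e\|_Q^2+\|r\|_X^2\bigr]_{t_1}^{t_2}\leq -L_C\int_{t_1}^{t_2}\|p(\tau)\|_{\Vhat}\|r(\tau)\|_X\,d\tau\leq 0,
\]
so that $t\mapsto \|e(t)\|_Q^2+\|r(t)\|_X^2$ is nonincreasing and nonnegative on $[0,\infty)$, and therefore converges to some limit $L\geq 0$.

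Combined with Theorem~\ref{th:state}, which yields $\|r(t)\|_X\to 0$, this gives $\|e(t)\|_Q^2\to L$, so it suffices to show $L=0$. Assuming for contradiction that $L>0$, set $\gamma:=\sqrt{L/2}>0$; then there is some $T^\star>0$ with $\|e(t)\|_Q\geq\gamma$ for all $t\geq T^\star$. Feeding this $\gamma$ into Lemma~\ref{lemma1} yields constants $\varepsilon,T,T_1>0$ such that every interval $[t_1,t_1+T]$ with $t_1\geq\max\{T^\star,T_1\}$ contains a time instance $t_2$ at which $\|r(t_2)\|_X\geq\varepsilon$. Iterating the lemma, with each new $t_1$ chosen as the previous $t_2$, produces a sequence $t_2^{(n)}\to\infty$ along which $\|r(t_2^{(n)})\|_X\geq\varepsilon$, contradicting $\|r(t)\|_X\to 0$. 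Hence $L=0$, i.e.\ $\|\hat q(t)-q^\ast\|_Q\to 0$ as $t\to\infty$.

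I do not foresee a serious technical obstacle, since the real work has already been packaged into Lemma~\ref{lemma1}, which is precisely engineered (via the persistence of excitation condition and the link conditions) to convert a lower bound on the parameter error into a lower bound on the state error at some nearby instant. The only slightly delicate point in the wrap-up is noticing that the energy computation behind Proposition~\ref{prop:welldefine} actually delivers monotonicity, not just boundedness, of $\|e\|_Q^2+\|r\|_X^2$; together with Theorem~\ref{th:state}, this is what forces $\|e(t)\|_Q$ itself to possess a limit and makes the contradiction via Lemma~\ref{lemma1} immediate.
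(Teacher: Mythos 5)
Your proof is correct and rests on the same two ingredients as the paper's --- Theorem~\ref{th:state} and Lemma~\ref{lemma1} --- with the paper simply phrasing the final step as a contraposition of Lemma~\ref{lemma1} (for any $\gamma>0$, once $\|r(t_2)\|_X<\varepsilon$ for all $t_2\in[t_1,t_1+T]$ one concludes $\|e(t_1)\|_Q<\gamma$) rather than as a contradiction. Your preliminary monotonicity-and-limit argument for $\|e\|_Q^2+\|r\|_X^2$ is valid but not needed, and note that the iteration $t_1^{(n+1)}:=t_2^{(n)}$ could stagnate since $t_2$ may coincide with $t_1$; a single $t_2$ beyond the time where $\|r\|_X<\varepsilon$ already gives the contradiction.
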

\begin{proof}
Contraposition in Lemma \ref{lemma1} gives the following assertion (as we have imposed Assumptions \ref{ass:equation}, \ref{cond:nu}, \ref{cond:mu02}, \ref{asspof}, and \ref{cond:link} to hold):
\begin{quote}
For any given $\gamma >0$, there are $\varepsilon$, $T$, $T_1>0$ such that for all $t_1\geq T_1$ the following holds true:\\ 
If for all $t_2\in [t_1,t_1+T]$ the state error $\|r(t_2)\|_X< \varepsilon$, then the parameter error $\|e(t_1)\|_Q < \gamma$.
\end{quote}
Thus, given arbitrary $\gamma>0$, we choose $\varepsilon$ and $T_1$ according to Lemma \ref{lemma1}. Then, by Theorem \ref{th:state}, there exists $T_2\geq T_1$ such that for all $t\geq T_2$ we have $\|r(t)\|_X<\varepsilon$. Hence, for all $t_1\geq T_2$, the above statement yields $\|e(t_1)\|_Q < \gamma$.
\end{proof}
\subsection{Convergence with noisy data}\label{sec:noisy}
In case noisy data $\zdel$ are given instead of $z$ and the range of $G$ is non-closed, the quantity $G^\dagger \zdel$ might not be well-defined, and even if it is well-defined it will not depend on $\zdel$ in a stable manner. Thus we define a regularized version of the ``observed'' part of $u^*$
$$u_\alpha^\delta=G_\alpha \zdel$$
with $G_\alpha$ a regularized version of $G^\dagger$ with regularization parameter $\alpha$, defined, e.g., by the  Tikhonov-Philips method
$$G_\alpha=(G^*G+\alpha I)^{-1}G^*:Z\to \cN(G)^\bot\subseteq X$$
with $G^*:Z\to X$ the Hilbert space adjoint of $G:X\to Z$, and $\alpha>0$ appropriately chosen.
Additionally one might add a stabilizing term defined by another parameter $\sigma=\sigma(t)\geq 0$, see e.g. \cite{Ioannou}. Note that also the case $\sigma \equiv 0$ is included in our analysis. As a matter of fact, it turns out that this term is not really needed. For the sake of completeness to some extent we will also consider the case of strictly positive $\sigma$. The case of partially vanishing, partially positive $\sigma$ is not included here (but could be approximated by some positive $\sigma$ which partially gets arbitrarily small).

Therewith, we redefine the estimators $\hat{q}$, $\hat{u}$ by 
\begin{eqnarray}
\hat{q}_t-A(u_\alpha^\delta+P\hat{u})^*(R\hat{u}-u_\alpha^\delta)=-\sigma \hat{q} \label{eq:noisy_q} \\
\hat{u}_t+C(\hat{q},u_\alpha^\delta+P\hat{u})+\mu R M \frac{R\hat{u}-u_\alpha^\delta}{\|R\hat{u}-u_\alpha^\delta\|}_{\Vtil}+\nu PN P\hat{u}=f \label{eq:noisy_u}\\
(\hat{q},\hat{u})(0)=(\hat{q}_0,\hat{u}_0) \label{eq:noisy_0}
\end{eqnarray}
where $\alpha=\alpha(t)$, $\mu=\mu(t)$ and $\nu=\nu(t)$ are chosen properly dependent on the noise level $\delta(t)$ in
\begin{equation} \label{delta} 
\delta(t) \geq \| z^\delta(t)-z(t)\|_Z.
\end{equation}
\subsubsection{Well-definedness}
For showing well-definedness we take again a look at the error components
$e=\hat{q}-q^\ast$, $r=R\hat{u}-Ru^\ast$,  $p=P\hat{u}-Pu^\ast$ and the errors including the regularized version of the ``observed'' part 
\begin{equation}\label{id2}
r^\delta_\alpha=R\hat{u}-u^\delta_\alpha=r-d^\delta_\alpha \mbox{ and } d^\delta_\alpha=u^\delta_\alpha-Ru^\ast.
\end{equation}
Therewith the equalities 
\begin{equation}\label{id3}
 u^\delta_\alpha+P\hat{u}=u^\delta_\alpha+P\hat{u}+u^\ast-Ru^\ast-Pu^\ast=u^\ast+d^\delta_\alpha+p
\end{equation} hold.
Then the differential equations for the errors are
\begin{eqnarray}
\fl e_t-A(u^\ast+d^\delta_\alpha+p)^\ast r^\delta_\alpha= -\sigma \hat{q}\label{eq:smooth_err_q} \\
\fl r_t+RC(q^\ast,u^\ast+d^\delta_\alpha+p)-RC(q^\ast,u^\ast)+RA(u^\ast+d^\delta_\alpha+p)e+\mu RM \frac{r^\delta_\alpha}{\|r^\delta_\alpha\|_{\tilde{V}}}=0 \label{eq:smooth_err_r}\\
\fl p_t+PC(q^\ast,u^\ast+d^\delta_\alpha+p)-PC(q^\ast,u^\ast)+PA(u^\ast+d^\delta_\alpha+p)e+\nu PNP\hat{u}=0 \label{eq:smooth_err_p}\\
\fl (e,r,p)(0)=(\hat{q}_0-q^\ast,R(\hat{u}_0-u_0), P(\hat{u}_0-u_0)). \label{eq:smooth_err_0}
\end{eqnarray}
In case of noisy data we get a wellposedness result too. As in the exact data case some assumptions concerning the parameters $\mu$ and $\nu$ are required.
\begin{assumption}\label{ass:mu_nu}For all $t>0$
\begin{enumerate}
\item \begin{eqnarray*}
\fl \mu(t)\geq \max\Bigg\{\frac{4 L_C}{c_M }(\|d^\delta_\alpha(t)\|_\Vtil+\|p(t)\|_\Vhat)\|r(t)\|_X  \\
\fl +\frac{4 C_A}{c_M}(1+\|d^\delta_\alpha(t)\|_\Vtil + \|p(t)\|_\Vhat)\|e(t)\|_Q \|d^\delta_\alpha(t)\|_X , \frac{2 \sigma(t)}{c_M} \|r(t)\|^2_X \Bigg\}\frac{\|r^\delta_\alpha (t)\|_\Vtil}{\|r(t)\|^2_\VXtil}. 
\end{eqnarray*}
\item \begin{eqnarray*}
\fl \nu(t)\geq \max \Big\{\underline{\nu}, \Big( \frac{4(L_C+C_A\|e(t)\|_Q)}{c_N}(\|p(t)\|_\Vhat+\|d^\delta_\alpha\|_\Vtil)\\
+\frac{2 C_A C_{\Vhat \VXhat}C_{\VXhat X}}{c_N}\|p(t)\|_\Vhat\Big)\frac{\|p(t)\|_X}{\|p(t)|^2_\VXhat}\Big\}.
\end{eqnarray*}
\end{enumerate}
\end{assumption}

A condition on the error between the regularized version of the ``observed'' part and the exact state is also needed, namely for all considered time instances $t$
\begin{equation} \label{cond_d}
\|d^\delta_\alpha(t)\|_\Vtil \leq \frac{c_M}{2 C_M}\frac{\|r(t)\|^2_\VXtil}{\|r(t)\|_X}.
\end{equation}
should hold.
This condition on smallness can be further accessed using the fact that $d^\delta_\alpha=G_\alpha z^\delta-G^\dagger z$ and (\ref{delta}), based on results of regularization theory and an appropriate choice of $\alpha(t)$ in dependence of $\delta(t)$ and $z^\delta(t)$, see, e.g. \cite{EHNbuch}. We now prove that $\hat{q}$ and $\hat{u}$ according to (\ref{eq:noisy_q}) and (\ref{eq:noisy_u}) are well defined at least up to a certain time.
\begin{proposition} \label{well_defined_noisy}
Let Assumptions \ref{ass:equation} with (\ref{AbdVV}) and \ref{ass:mu_nu} hold and let $(\hat{q}_0-q^\ast,\hat{u}_0-u_0)\in Q \times (\tilde{V}+\hat{V})$. Then there exists a solution $(\hat{q}(t),\hat{u}(t))\in Q \times (\tilde{V}+\hat{V})$ of (\ref{eq:noisy_q})-(\ref{eq:noisy_0}) for all times $0<t<T^\ast$ where 
\begin{equation}
\label{Tast}
T^\ast=\min \left\{t>0 : \|d^\delta_\alpha\|_\Vtil >\frac{c_M}{2 C_M}\frac{\|r\|^2_\VXtil}{\|r\|_X} \right\}.
\end{equation}
(i.e. the first time, when condition (\ref{cond_d}) is violated) and satisfies the following error bounds (cf. (\ref{erp})).
\begin{enumerate}
\item Case: $\sigma \equiv0$: For all $ 0<t<T^\ast$: $\|e(t)\|^2_Q+\|r(t)\|^2_X \leq \|e(0)\|^2_Q+\|r(0)\|^2_X< \infty$;

Case: $\sigma>0$: For all $0<t<T^\ast$: 

$\|e(t)\|^2_Q+\|r\|_X^2\leq \max\left\{\|q^\ast\|^2_Q,\|e(0)\|^2_Q+\|r(0)\|^2_X\right\}< \infty$;
\item For all $ \, 0<t<T^\ast$: 

$\|p(t)\|^2_X \leq 2 \left\{\frac{C_N^2 C^4_{\VXtil X}}{c_N^2}\sup_{t>0}{\|Pu^\ast(t)\|^2_\Vtil}+\frac{C_A C^2_{\VXtil X}}{c_N \underline{\nu}}[\|e(0)\|^2_Q+\|r(0)\|^2_X] , \|p(0)\|^2_X \right\}$;
\item If $T^\ast=\infty$  (cf. (\ref{erp})) and $\sigma \equiv 0$ then $\int^{\infty}_{0}{\|p(\tau)\|_\Vtil\|r(\tau)\|_X}d\tau\leq \frac{\|e(0)\|^2_Q \|r(0)\|^2_X}{2L_C}$.
\end{enumerate}
\end{proposition}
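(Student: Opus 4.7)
The plan is to mirror the exact-data proof of Proposition~\ref{prop:welldefine}, keeping careful track of the extra terms produced by replacing $u^\ast$ with $u^\delta_\alpha$. First I would multiply (\ref{eq:smooth_err_q}) by $e$ in $Q$ and (\ref{eq:smooth_err_r}) by $r$ in $X$ and add. The cancellation of the $A$-terms is now only partial: writing $r=r^\delta_\alpha+d^\delta_\alpha$ and using adjointness, the leftover is $-(A(u^\ast+d^\delta_\alpha+p)e,d^\delta_\alpha)_X$, which (\ref{AbdVV}) bounds by $C_A(1+\|d^\delta_\alpha\|_\Vtil+\|p\|_\Vhat)\|e\|_Q\|d^\delta_\alpha\|_X$. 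The $C$-Lipschitz contribution is $L_C(\|d^\delta_\alpha\|_\Vtil+\|p\|_\Vhat)\|r\|_X$. Splitting the coercivity term along $r=r^\delta_\alpha+d^\delta_\alpha$ yields a negative part $-\mu c_M\|r^\delta_\alpha\|^2_{\VXtil}/\|r^\delta_\alpha\|_\Vtil$ and a perturbation bounded by $\mu C_M\|d^\delta_\alpha\|_X$; condition (\ref{cond_d}) is precisely what lets the coercivity dominate this perturbation, after which Assumption~\ref{ass:mu_nu}(i) absorbs the Lipschitz and residual-$A$ contributions. For $\sigma\equiv 0$ this gives immediate monotonicity and part (i). For $\sigma>0$ the extra term $-\sigma(\hat{q},e)_Q=-\sigma\|e\|^2_Q-\sigma(q^\ast,e)_Q$ is handled by Young's inequality, and the second max-argument in Assumption~\ref{ass:mu_nu}(i) produces an additional $-\sigma\|r\|^2_X$ contribution from the coercivity; the resulting differential inequality $\eta'\le -a\eta+b$ with $\eta=\frac{1}{2}(\|e\|^2_Q+\|r\|^2_X)$ and $b\propto\sigma\|q^\ast\|^2_Q$ yields the stated $\max$-bound via the ODE-implication lemma recalled before Proposition~\ref{prop:welldefine}.

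For part (ii), I would test (\ref{eq:smooth_err_p}) against $p$, apply (\ref{AbdVV}) together with the Lipschitz estimate for $C$, and use Young's inequality on the cross-terms $\|e\|_Q\|p\|_X$ and $\|d^\delta_\alpha\|_\Vtil\|p\|_X$. Assumption~\ref{ass:mu_nu}(ii) is designed so that the enlarged $\nu$ absorbs all the $\|p\|_\Vhat\|p\|_X$-type contributions into the coercivity $-\nu c_N\|p\|^2_\VXhat$, while $-\nu(PNPu^\ast,p)_X$ is split exactly as in (\ref{est1}). Rescaling time by $\tau(t)=\frac{c_N C^2_{\widehat{VX}X}}{2}\int_0^t\nu(\xi)\,d\xi$, as in the exact-data proof, turns the inequality into $\tilde{\mathcal{V}}'(\tau)\le-\tilde{\mathcal{V}}(\tau)+b$, with $b$ depending on $\sup_t\|Pu^\ast(t)\|^2_\Vhat$ and on $\sup_t(\|e(t)\|^2_Q+\|r(t)\|^2_X)$, the latter now controlled by part (i); the ODE-implication lemma then delivers the claimed bound.

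For part (iii), when $T^\ast=\infty$ condition (\ref{cond_d}) holds for all $t\ge 0$, and with $\sigma\equiv 0$ the argument of Step~1 sharpens exactly as in (\ref{align:neplusnr}) to
$$
\frac{1}{2}\bigl[\|e\|^2_Q+\|r\|^2_X\bigr]_0^t\le -L_C\int_0^t\|p(\tau)\|_\Vhat\|r(\tau)\|_X\,d\tau,
$$
and letting $t\to\infty$ together with the bound from part (i) yields the integrability assertion.

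The main obstacle is the bookkeeping in Step~1: one must verify that the enlarged lower bounds on $\mu$ in Assumption~\ref{ass:mu_nu}(i), combined with the smallness condition (\ref{cond_d}), simultaneously absorb the Lipschitz terms (involving $\|d^\delta_\alpha\|_\Vtil+\|p\|_\Vhat$), the residual $A$-term (involving $\|e\|_Q\|d^\delta_\alpha\|_X$), the coercivity perturbation $\mu C_M\|d^\delta_\alpha\|_X$, and---when $\sigma>0$---produce the required $-\sigma\|r\|^2_X$ contribution. The subtle point is bridging between $\|r^\delta_\alpha\|^2_\VXtil/\|r^\delta_\alpha\|_\Vtil$ (coming from coercivity) and the quantities $\|r\|_X,\|r\|_\Vtil$ appearing in the perturbative terms; this comparison is exactly what forces the particular form of (\ref{cond_d}) and hence the definition of $T^\ast$ in (\ref{Tast}).
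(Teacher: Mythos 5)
Your overall strategy is the paper's: test the error equations with $e$, $r$, $p$, absorb the perturbations via Assumption \ref{ass:mu_nu} and (\ref{cond_d}), and rescale time to apply the ODE-implication lemma. Parts (ii) and (iii) and the treatment of the $\sigma$-term and of the residual $A$-term $-(RA(u^\ast+d^\delta_\alpha+p)e,d^\delta_\alpha)_X$ all match. However, there is one concrete misstep in Step 1: you split the stabilization term the wrong way around. You substitute $r=r^\delta_\alpha+d^\delta_\alpha$ into the \emph{second} slot of $-\mu\bigl(RM\frac{r^\delta_\alpha}{\|r^\delta_\alpha\|_\Vtil},r\bigr)_X$, obtaining the negative part $-\mu c_M\|r^\delta_\alpha\|^2_\VXtil/\|r^\delta_\alpha\|_\Vtil$ and a perturbation $\mu C_M\|d^\delta_\alpha\|_X$. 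Absorbing that perturbation would require $\|d^\delta_\alpha\|_X\leq\frac{c_M}{2C_M}\|r^\delta_\alpha\|^2_\VXtil/\|r^\delta_\alpha\|_\Vtil$, which is \emph{not} condition (\ref{cond_d}); moreover, the resulting coercivity quotient $\|r^\delta_\alpha\|^2_\VXtil/\|r^\delta_\alpha\|_\Vtil$ does not match the factor $\|r^\delta_\alpha\|_\Vtil/\|r\|^2_\VXtil$ appearing in Assumption \ref{ass:mu_nu}(i), so the subsequent absorption of the Lipschitz term, the residual $A$-term, and (for $\sigma>0$) the production of $-\frac{\sigma}{2}\|r\|^2_X$ would not go through as you describe.

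The paper instead substitutes $r^\delta_\alpha=r-d^\delta_\alpha$ into the \emph{first} slot:
\begin{equation*}
-\mu\Bigl(RM\tfrac{r^\delta_\alpha}{\|r^\delta_\alpha\|_\Vtil},r\Bigr)_X
=-\tfrac{\mu}{\|r^\delta_\alpha\|_\Vtil}(RMr,r)_X+\tfrac{\mu}{\|r^\delta_\alpha\|_\Vtil}(RMd^\delta_\alpha,r)_X
\leq-\mu c_M\tfrac{\|r\|^2_\VXtil}{\|r^\delta_\alpha\|_\Vtil}+\mu C_M\tfrac{\|d^\delta_\alpha\|_\Vtil\|r\|_X}{\|r^\delta_\alpha\|_\Vtil},
\end{equation*}
so that (\ref{cond_d}) bounds the perturbation by exactly half the coercivity, leaving $-\frac{\mu c_M}{2}\|r\|^2_\VXtil/\|r^\delta_\alpha\|_\Vtil$, which is precisely the quantity that Assumption \ref{ass:mu_nu}(i) (with its factor $\|r^\delta_\alpha\|_\Vtil/\|r\|^2_\VXtil$) is built to exploit. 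With this correction the rest of your argument, including the $\sigma>0$ case and parts (ii) and (iii), goes through as you outline.
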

\begin{proof}
\textit{1.} For proving the proposition, like in the exact data case we take a look at the norms of the squared errors.
\begin{eqnarray*}
\fl \frac{d}{dt}\frac{1}{2}[\|e\|^2_Q+\|r\|^2_X]=(e_t,e)_Q+(r_t,r)_X \\
\fl=\underbrace{\left(A(u^\ast+d^\delta_\alpha+p)^\ast r^\delta_\alpha,e \right)_Q- \left(RA(u^\ast+d^\delta_\alpha+p)e,r \right)_X}_{(1)} \\
\fl-\underbrace{\left(\sigma \hat{q},e\right)_Q}_{(2)} +\underbrace{\left(RC(q^\ast,u^\ast)-RC(q^\ast,u^\ast+d^\delta_\alpha+p),r \right)_X}_{(3)}-\underbrace{\mu \left(RM\frac{r^\delta_\alpha}{\|r^\delta_\alpha\|_{\tilde{V}}},r \right)_X}_{(4)}
\end{eqnarray*}
Our goal is now to estimate all these terms appropriately.\\
ad $(1)$:
Using the identity $r=r^\delta_\alpha+d^\delta_\alpha$ and Assumption \ref{ass:equation} we get
\begin{eqnarray*}
\fl \left(A(u^\ast+d^\delta_\alpha+p)^\ast r^\delta_\alpha,e \right)_Q-\left(RA(u^\ast+d^\delta_\alpha+p)e,r^\delta_\alpha+d^\delta_\alpha \right)_X =-\left(RA(u^\ast+d^\delta_\alpha+p)e,d^\delta_\alpha \right)_X \\
\fl=-\left(RA(u^\ast+d^\delta_\alpha+p)e,d^\delta_\alpha \right)_X  \leq C_A (1+\|d^\delta_\alpha\|_{\tilde{V}}+\|p\|_ {\hat{V}} )\|e\|_Q \|d^\delta_\alpha\|_X
\end{eqnarray*}
ad $(2)$: With some computations we get
\begin{eqnarray*}
\fl-(\sigma \hat{q},e)_Q = -\sigma (\hat{q}\pm q^\ast,e)_Q = \sigma(q^\ast,e)_Q-\sigma(e,e)_Q \\
\fl \leq -\sigma \|e\|^2_Q+\sigma\|q^\ast\|_Q \|e\|_Q \leq -\sigma \|e\|^2_Q +\frac{\sigma}{2}(\|q^\ast\|^2_Q+\|e\|^2_Q)=\frac{\sigma}{2}\|q^\ast\|^2_Q-\frac{\sigma}{2}\|e\|^2_Q.
\end{eqnarray*}
ad $(3)$: The Lipschitz condition on $C$ yields 
\begin{eqnarray*}
\fl \left(RC(q^\ast,u^\ast)-RC(q^\ast,u^\ast+d^\delta_\alpha+p),r \right)_X  \leq \|C(q^\ast,u^\ast)-C(q^\ast,u^\ast+d^\delta_\alpha+p)\|_X \|r\|_X \\
\fl \leq L_C (\|d^\delta_\alpha\|_{\tilde{V}}+\|p\|_{\hat{V}})\|r\|_X.
\end{eqnarray*}
ad $(4)$: Using coercivity and boundedness of $M$ (Assumption \ref{ass:equation}) and $t\leq T^\ast$ with $T^\ast$ as in (\ref{Tast}) results in
\begin{eqnarray}
\fl -\mu \left(RM \frac{r^\delta_\alpha}{\|r^\delta_\alpha\|_{\tilde{V}}},r \right)_X=-\frac{\mu}{\|r^\delta_\alpha\|_{\tilde{V}}}\left(RMr,r \right)_X+\frac{\mu}{\|r^\delta_\alpha\|_{\tilde{V}}}\left(RMd^\delta_\alpha,r\right)_X \nonumber \\
\fl \leq -\mu c_M \frac{\|r\|^2_{\widetilde{VX}}}{\|r^\delta_\alpha\|_{\tilde{V}}}+\mu C_M \frac{\|d^\delta_\alpha\|_{\tilde{V}}\|r\|_X}{\|r^\delta_\alpha\|_{\tilde{V}}}\leq
-\frac{\mu}{2}c_M \frac{\|r\|^2_{\widetilde{VX}}}{\|r^\delta_\alpha\|_{\tilde{V}}} \label{est5}
\end{eqnarray}
Inserting in the original inequality gives
\begin{eqnarray}
\fl \frac{d}{dt}\frac{1}{2}[\|e\|^2_Q+\|r\|^2_X] \leq C_A(1+\|d^\delta_\alpha\|_\Vtil+\|p\|_\Vhat)\|e\|_Q\|d^\delta_\alpha\|_X +\frac{\sigma}{2}\|q^\ast\|^2_Q-\frac{\sigma}{2}\|e\|^2_Q \nonumber \\
\fl +L_C(\|d^\delta_\alpha\|_\Vtil+\|p\|_\Vhat)\|r\|_X-\frac{\mu}{2}c_M \frac{\|r\|^2_\VXtil}{\|r^\delta_\alpha\|_\Vtil} \label{est4}
\end{eqnarray}
Using Assumption \ref{ass:mu_nu} on $\mu$ we get
\begin{eqnarray*}
\fl \frac{d}{dt}\frac{1}{2}[\|e\|^2_Q+\|r\|_X^2]\leq \frac{\sigma}{2}\|q^\ast\|^2_Q-\frac{\sigma}{2}\|e\|^2_Q-\frac{\mu c_M}{4}\frac{\|r\|^2_\VXtil}{\|r^\alpha_\delta\|_\Vtil} \leq \frac{\sigma}{2}(\|q^\ast\|^2_Q-(\|e\|^2_Q+\|r\|^2_X)). 
\end{eqnarray*}
Now we distinguish between the two cases $\sigma\equiv 0$ and $\sigma >0$.
For the first case $\sigma =0$ we have
$$
\frac{d}{dt}\frac{1}{2}[\|e\|^2_Q+\|r\|^2_X]\leq 0 \, \Rightarrow \, \frac{1}{2}[\|e\|^2_Q+\|r\|^2_X]\leq \frac{1}{2}[\|e(0)\|^2_Q+\|r(0)\|^2_X].
$$
For the second case $\sigma >0$ we define $\tau (t):= \int^t_0 {\sigma(\xi)}d \xi$, $\mathcal{V}(t):=\frac{1}{2}[\|e(t)\|^2_Q+\|r(t)\|^2_X]$ and $\tilde{\mathcal{V}}(\tau(t)):=\mathcal{V}(t)$.
Differentiating $\tilde{\mathcal{V}}$ with respect to $\tau$ leads to
$$
\frac{d}{d\tau}\tilde{\mathcal{V}}(\tau(t))=\frac{1}{2}[\|e\|^2_Q+\|r\|^2_X]\frac{1}{\sigma(t)}
 \leq \frac{1}{2}\|q^\ast\|^2_Q-\frac{1}{2}[\|e\|^2_Q+\|r\|^2_X]= \frac{1}{2}\|q^\ast\|^2_Q-\tilde{\mathcal{V}}(\tau(t)).
$$
So we have for all $t>0$
$$
\frac{1}{2}[\|e\|^2_Q+\|r\|^2_X]\leq \max \left\{\frac{1}{2}\|q^\ast\|^2_Q, \frac{1}{2}[\|e(0)\|^2_Q+\|r(0)\|^2_X]\right\} < \infty.
$$
\textit{2.} We now consider the error for the ``unobserved'' part of the state. Similarly to (\ref{estp}) we get
\begin{eqnarray*}
\fl \frac{d}{dt}\frac{1}{2}[\|p\|^2_X]= (p_t,p)_X\\
\fl-\left(PC(q^\ast,u^\ast+d^\delta_\alpha+p)-PC(q^\ast,u^\ast),p\right)_X-\left(PA(u^\ast+d^\delta_\alpha+p)e,p \right)_X-\nu \left( PNP\hat{u},p \right)_X \\
\fl \leq L_C (\|d^\delta_\alpha\|_\Vtil+\|p\|_\Vhat)\|p\|_X+C_A(1+\|d^\delta_\alpha\|_\Vtil+\|p\|_\Vhat)\|e\|_Q\|p\|_X \\
\fl -\frac{\nu c_N}{2} \|p\|^2_{\widehat{VX}}+\nu \frac{C_N^2 C^2_{\widehat{VX}X}}{2c_N}\|Pu^\ast\|^2_{\hat{V}} \\
\fl\leq \frac{C_A}{2}(\|e\|^2_Q+C_{\VXhat X}C_{\Vhat \VXhat}\|p\|_{\Vhat}\|p\|_X)+(L_C+C_A\|e\|_Q)(\|d^\delta_\alpha\|_\Vtil+\|p\|_\Vhat)\|p\|_X \\
\fl-\frac{\nu c_N}{2}\|p\|^2_\VXhat+\frac{\nu C^2_N C^2_{\VXhat X}}{2c_N}\|Pu^\ast\|_\Vhat^2 \\
\fl= \left[(L_C+C_A\|e\|_Q)(\|d^\delta_\alpha\|_\Vtil+\|p\|_\Vhat)+\frac{C_A C_{\VXhat X}C_{\Vhat \VXhat}}{2}\|p\|_\Vhat\right]\|p\|_X \\
 \fl-\frac{\nu c_N}{2}\|p\|^2_\VXhat+\frac{C_A}{2}\|e\|^2_Q+\frac{\nu C^2_N C^2_{\VXhat X}}{2 c_N}\|Pu^\ast\|^2_\Vhat.
\end{eqnarray*}
Here we have used Assumption \ref{ass:equation} and (\ref{est1}).
Now we make use of Assumption \ref{ass:mu_nu} on $\nu$ to get
$$
\frac{d}{dt}\frac{1}{2}[\|p\|^2_X]\leq -\frac{\nu c_N}{4}\|p\|^2_{\VXhat}+\nu \frac{C_N^2C^2_{\VXhat X}}{2c_N}\|Pu^\ast\|^2_\Vhat+\frac{C_A}{2}\|e\|^2_Q.
$$
We again define functions $\tau(t):=\frac{c_N}{2 C^2_{\VXhat X}}\int^t_0{\nu(\xi)}d\xi$,  $\mathcal{V}(t):=\frac{1}{2}\|p(t)\|^2_X$ and $\tilde{\mathcal{V}}(\tau(t)):= \mathcal{V}(t)$.
Differentiating $\tilde{\mathcal{V}}$ with respect to $\tau$ leads to
\begin{eqnarray*}
\frac{d}{d\tau}\tilde{\mathcal{V}}(\tau)
\leq (-\frac{\nu c_N}{4}\|p\|^2_\VXhat+\nu\frac{C^2_N C^2_{\VXhat X}}{2 c_N}\|Pu^\ast\|^2_\Vhat+\frac{C_A}{2}\|e\|^2_Q)\frac{2 C^2_{\VXhat X}}{c_N \nu(t)}\\
=-\frac{C^2_{\VXhat X}}{2}\|p\|^2_\VXhat+\frac{C_N^2 C^4_{\VXhat X}}{c_N^2}\|Pu^\ast\|^2_\Vhat+\frac{C_A C^2_{\VXhat X}}{c_N \nu(t)}\|e\|^2_Q. 
\end{eqnarray*}
Using the embedding $\VXhat \hookrightarrow X$ and the estimate for $\nu$ in Assumption \ref{ass:mu_nu} gives
$$
\frac{d}{d\tau}\tilde{\mathcal{V}}(\tau) \leq
-\tilde{\mathcal{V}}(\tau(t))+\frac{C^2_N C^4_{\VXhat X}}{c_N^2}\|Pu^\ast\|^2_\Vhat +\frac{C_A C^2_{\VXhat X}}{c_N \underline{\nu}}\|e\|^2_Q.
$$
From this we get the assertion.\\
\textit{3.} This is a consequence of inequality (\ref{est4}) and Assumption \ref{ass:mu_nu}.
\begin{eqnarray*}
\fl \int^{\infty}_0{\|p(\tau)\|_\Vtil \|r(\tau)\|_X}d\tau \leq \frac{c_M}{4}\int^\infty_0{\mu(\tau)\frac{\|r(\tau)\|^2_\VXtil}{\|r^\delta_\alpha(\tau)\|_\Vtil}}d\tau \leq \int^{\infty}_0{\frac{d}{dt}\frac{1}{2}[\|e(\tau)\|^2_Q+\|r(\tau)\|^2_X]}d\tau \\
\fl \leq \frac{1}{2}[\|e(0)\|^2_Q+\|r(0)\|^2_X].
\end{eqnarray*}
\end{proof}
\subsubsection{State convergence}
As in the exact data case we introduce an additional lower bound on $\mu$ for proving  convergence of the ``observed'' part of the state estimate.
\begin{assumption}
\label{ass:mu_noisy}
There exists a constant $\tilde{c}_1>0$ such that for all $t>0$ 
\begin{eqnarray*}
\fl \mu(t)\geq \max\Big\{\frac{4L_C}{c_M}(\|d^\delta_\alpha(t)\|_\Vtil+\|p(t)\|_\Vhat)\|r(t)\|_X \\
 +\frac{4C_A}{c_M}(1+\|d^\delta_\alpha(t)\|_\Vtil+\|p(t)\|_\Vhat)\|e(t)\|_Q\|d^\delta_\alpha\|_X, \tilde{c}_1 \|r(t)\|^2_X \Big\} \frac{\|r^\delta_\alpha(t)\|_\Vtil}{\|r(t)\|^2_\VXtil}.
\end{eqnarray*}
\end{assumption}
\begin{theorem}[State convergence]
Under Assumptions \ref{ass:equation} with (\ref{AbdVV}) and \ref{ass:mu_noisy} and if $T^\ast=\infty$ (cf. (\ref{erp})) and $\sigma \equiv 0$ we have that $\|R(\hat{u}(t)-u^\ast(t))\|_X=\|r(t)\|_X \rightarrow 0$ as $t\rightarrow \infty$.
\end{theorem}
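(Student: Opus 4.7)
The proof mirrors that of Theorem \ref{th:state} with modifications for the noisy-data terms. The plan is threefold: (i) show $\int_0^\infty \|r(\tau)\|_X^2 \, d\tau < \infty$ using the extra lower bound on $\mu$ provided by Assumption \ref{ass:mu_noisy}; (ii) obtain a uniform Lipschitz-type bound $\|r(t_2)\|_X^2 - \|r(t_1)\|_X^2 \leq c_2 (t_2-t_1)$; (iii) conclude $\|r(t)\|_X \to 0$ as $t\to\infty$ by the same contradiction argument used at the end of Theorem \ref{th:state}.

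For (i), I would revisit estimate (\ref{est4}) from the proof of Proposition \ref{well_defined_noisy} specialized to $\sigma \equiv 0$. The first component of the $\max$ in Assumption \ref{ass:mu_noisy} absorbs the two positive contributions $L_C(\|d^\delta_\alpha\|_\Vtil+\|p\|_\Vhat)\|r\|_X$ and $C_A(1+\|d^\delta_\alpha\|_\Vtil+\|p\|_\Vhat)\|e\|_Q \|d^\delta_\alpha\|_X$ at the cost of only half the dissipation, leaving a residual $-\frac{\mu c_M}{4}\frac{\|r\|_\VXtil^2}{\|r^\delta_\alpha\|_\Vtil}$; the second component of the $\max$ ($\tilde c_1 \|r\|_X^2$) then bounds this residual by $-\frac{c_M \tilde c_1}{4}\|r\|_X^2$. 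Thus $\frac{d}{dt}\frac{1}{2}(\|e\|_Q^2+\|r\|_X^2) \leq -\frac{c_M \tilde c_1}{4}\|r\|_X^2$, and integrating over $[0,\infty)$ together with Proposition \ref{well_defined_noisy}(i) gives $\int_0^\infty \|r(\tau)\|_X^2\,d\tau \leq \frac{2}{c_M \tilde c_1}(\|e(0)\|_Q^2 + \|r(0)\|_X^2) < \infty$.

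For (ii), I would multiply (\ref{eq:smooth_err_r}) by $r$, bound the $RC$-difference via the Lipschitz property of $C$ and the $RA$-term via Assumption \ref{ass:equation} with (\ref{AbdVV}), and drop the non-positive $\mu$-dissipation term to obtain an upper estimate for $\frac{d}{dt}\|r\|_X^2$. Since Proposition \ref{well_defined_noisy} provides uniform bounds on $\|e\|_Q$, $\|r\|_X$ and $\|p\|_X$, and condition (\ref{cond_d}) self-bounds $\|d^\delta_\alpha\|_\Vtil$ against $\|r\|_\VXtil^2/\|r\|_X$, Young's inequality together with these a-priori estimates should produce a constant $c_2 > 0$ with $\frac{d}{dt}\|r\|_X^2 \leq c_2$. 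Integrating over $[t-\gamma, t]$ then yields, exactly as in (\ref{ineq:gamma}), the bound $\int_{t-\gamma}^t \|r(\tau)\|_X^2\,d\tau \geq \gamma\|r(t)\|_X^2 - \frac{c_2 \gamma^2}{2}$ for every $\gamma > 0$. Step (iii) is then verbatim the argument at the end of Theorem \ref{th:state}: if $\|r(t)\|_X \not\to 0$, pick $t_i \to \infty$ with $\|r(t_i)\|_X^2 \geq \varepsilon$, extract a subsequence $(t_{i_j})$ of spacing at least $\gamma = \varepsilon/c_2$, and sum the inequality from (ii) over $j = 1, \dots, n$ to obtain $n \cdot \frac{\varepsilon^2}{2 c_2} \leq \int_0^\infty \|r(\tau)\|_X^2\,d\tau$ for every $n$, contradicting (i).

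The main obstacle is step (ii). In Theorem \ref{th:state}, assumption (\ref{AbdX}) directly yields $\|A(u^\ast + p)\|_{Q\to X} \leq C_A(1 + \|p\|_X)$, uniformly bounded by Proposition \ref{prop:welldefine}; here (\ref{AbdVV}) instead involves $\|p\|_\Vhat$ and $\|d^\delta_\alpha\|_\Vtil$, neither of which is automatically pointwise bounded in $t$. Establishing the uniform Lipschitz bound on $\|r\|_X^2$ therefore requires essential use of condition (\ref{cond_d}) (which is globally valid precisely because $T^\ast = \infty$) and of a fraction of the $\mu$-dissipation to compensate the non-uniform $\Vtil$- and $\Vhat$-norm contributions, making step (ii) technically more delicate than in the exact-data case.
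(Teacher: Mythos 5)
Your proposal is correct and follows essentially the same route as the paper: square-integrability of $\|r\|_X$ in time from the $\tilde c_1\|r\|_X^2$ component of Assumption \ref{ass:mu_noisy}, a one-sided Lipschitz bound $\|r(t_2)\|_X^2-\|r(t_1)\|_X^2\le \tilde c_2(t_2-t_1)$ leading to (\ref{ineq:gamma}), and the identical contradiction argument. The ``main obstacle'' you flag in step (ii) is handled in the paper exactly as you suggest at the end of your proposal rather than by dropping the dissipation: the first component of the $\max$ in Assumption \ref{ass:mu_noisy} lets the retained $\mu$-term dominate $2L_C(\|d_\alpha^\delta\|_{\Vtil}+\|p\|_{\Vhat})\|r\|_X$ (with condition (\ref{cond_d}), valid globally since $T^\ast=\infty$, already used to keep half the coercivity of $M$), while the $A$-term is bounded through the $X$-norm constant $\tilde L_A=C_A(1+\sup_{t>0}\{\|d_\alpha^\delta\|_X+\|p\|_X\})$, i.e.\ effectively via (\ref{AbdX}) rather than (\ref{AbdVV}), so the non-uniform $\Vtil$- and $\Vhat$-norms never need to be pointwise bounded.
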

\begin{proof}
The proof is quite similar to the one in the exact data case. We start with considering the ``observed'' state error for $t_2 > t_1 >0$.

\begin{eqnarray*}
\fl \|r(t_2)\|^2_X-\|r(t_1)\|^2_X=\int^{t_2}_{t_1}(r_t,r)_X\\
\fl \int^{t_2}_{t_1}\Big(RC(q^\ast,u^\ast)-RC(q^\ast,u^\ast+d^\delta_\alpha +p)  -RA(u^\ast+d^\delta_\alpha+p)e-\frac{\mu}{\|r^\delta_\alpha\|_\Vtil}RM r^\delta_\alpha,r\Big)_X d\tau \\
\fl \leq \int^{t_2}_{t_1}\left\{ L_C (\|d^\delta_\alpha\|_{\tilde{V}}+\|p\|_\Vhat)\|r\|_X+C_A (1+\|p\|_X+\|d^\delta_\alpha\|_X)\|e\|_Q\|r\|_X -\frac{\mu c_M}{2}\frac{\|r\|^2_\VXtil}{\|r^\delta_\alpha\|_\Vtil}\right\} d\tau
\end{eqnarray*}
Here we have used the identities (\ref{id2}) and Assumptions \ref{ass:equation} and $T^\ast=\infty$.
Furthermore we will denote 
$$\tilde{L}_A:=C_A(1+\sup_{t>0}\left\{\|d^\delta_\alpha\|_X+\|p\|_X\right\}).$$
Assumption \ref{ass:mu_noisy} and Propostition \ref{well_defined_noisy} give us
\begin{eqnarray*}
\fl \|r(t_2)\|^2_X-\|r(t_1)\|_X^2 \\
\fl \leq \int^{t_2}_{t_1}\{    L_C (\|d^\delta_\alpha\|_\Vtil+\|p\|_\Vhat)\|r\|_X+\tilde{L}_A\|e\|_Q \|r\|_X-2L_C (\|d^\delta_\alpha\|_\Vtil+\|p\|_\Vhat)\|r\|_X\}d\tau  \\
\fl \leq \int^{t_2}_{t_1}{\frac{\tilde{L}_A}{2}(\|e\|^2_Q+\|r\|^2_X)}d\tau  \\
\fl \leq \int^{t_2}_{t_1}{\frac{\tilde{L}_A}{2}(\|e(0)\|^2_Q+\|r(0)\|^2_X)}d\tau =\tilde{c}_2(t_2-t_1) \,,
\end{eqnarray*}
where we have defined $\tilde{c}_2:=\frac{\tilde{L}_A}{2}\left\{\|e(0)\|^2_Q+\|r(0)\|^2_X\right\}$.

As in the exact data case (cf (\ref{ineq:gamma})) we get for any fixed $t$, $\gamma>0$ 
$$
\int^t_{t-\gamma}\|r(\tau)\|^2_X d\tau \geq \gamma \|r(t)\|^2_X-\frac{\tilde{c_2}}{2}\gamma^2.
$$
For $\sigma\equiv0$ the proof from now on is exactly the same as in the exact data case.
\end{proof}
\begin{remark}
If in (\ref{erp}) $T^\ast <\infty$ we cannot expect convergence of the state error to zero if $\delta >0$. However in this case the definition of $T^\ast$ implies 
$$
\|d^\delta_\alpha(T^\ast)\|_\Vtil > \frac{c_M}{2 C_M}\frac{\|r(T^\ast)\|^2_\VXtil}{\|r(T^\ast)\|_X}
$$
and therefore that $r(T^\ast)$ is small, namely in case the interpolation inequality (\ref{interpol}) holds we even have that at time $T^\ast$ the ``observed'' state error is (up to a constant factor $\frac{2 C_M}{c_{int}c_M}$) as small as the error in the ``observed'' state, both of them in the $\Vtil$-norm. 
\end{remark}
\subsubsection{Parameter convergence}
For proving that the estimated parameter converges to the exact one we again need two Lemmas.
\begin{lemma}\label{lemma_integral} Under Assumption \ref{ass:equation} with (\ref{AbdX}) the projected state errors $r$ and $p$ satisfy the following relation for all $0< t_a \leq t_b \leq t_c$:
\begin{eqnarray*}
\fl \|r(t_c)\|_X \geq \|\int^{t_c}_{t_b}RA(u^\ast+d^\delta_\alpha +p)e(t_a)d\tau\|_X - \|r(t_b)\|_X-\tilde{L}^2_A \int^{t_c}_{t_b}\int^{\tau}_{t_a}\|r^\delta_\alpha\|_X ds d\tau \\
\fl -\tilde{L}_A\int^{t_c}_{t_b}\int^{\tau}_{t_a}\sigma \|\hat{q}\|_Q ds d\tau -L_C \int^{t_c}_{t_b}(\|d^\delta_\alpha\|_\Vtil+\|p\|_\Vhat)d\tau-C_M\int^{t_c}_{t_b} \mu d\tau.
\end{eqnarray*}
\end{lemma}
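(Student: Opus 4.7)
The proof will follow the same template as Lemma \ref{lem:est_state}, with modifications to track the extra terms arising from $d^\delta_\alpha$ in the arguments of $C$ and $A$, and from the $-\sigma\hat{q}$ right-hand side in the error equation for $e$. First I would integrate the error equation (\ref{eq:smooth_err_r}) over $[t_b,t_c]$ to obtain
\begin{eqnarray*}
\fl r(t_c)-r(t_b) = \int_{t_b}^{t_c}\Bigl\{R(C(q^\ast,u^\ast)-C(q^\ast,u^\ast+d^\delta_\alpha+p))\\
-RA(u^\ast+d^\delta_\alpha+p)e-\mu RM\frac{r^\delta_\alpha}{\|r^\delta_\alpha\|_\Vtil}\Bigr\}d\tau\,,
\end{eqnarray*}
then apply the triangle and reverse triangle inequalities in the form $\|r(t_c)\|_X+\|r(t_b)\|_X \geq \|r(t_c)-r(t_b)\|_X$ to isolate $\|\int_{t_b}^{t_c} RA(u^\ast+d^\delta_\alpha+p) e\, d\tau\|_X$ on the right, with the remaining terms subtracted.

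Next I would estimate the two ``easy'' subtracted terms directly using Assumption \ref{ass:equation}: the Lipschitz estimate (\ref{eq:LipschitzC}) applied to $u^\ast+d^\delta_\alpha+p=u^\ast+(d^\delta_\alpha+p)$ gives
$$\|R(C(q^\ast,u^\ast)-C(q^\ast,u^\ast+d^\delta_\alpha+p))\|_X\leq L_C(\|d^\delta_\alpha\|_\Vtil+\|p\|_\Vhat)\,,$$
and the boundedness of $RM$ gives $\|\mu RM r^\delta_\alpha/\|r^\delta_\alpha\|_\Vtil\|_X\leq C_M\mu$, reproducing the two single-integral terms in the target estimate.

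The nontrivial step is the decomposition of the $A$-term. I would write $e(\tau)=e(t_a)+(e(\tau)-e(t_a))$ and use the triangle inequality to split
\begin{eqnarray*}
\fl \Bigl\|\int_{t_b}^{t_c} RA(u^\ast+d^\delta_\alpha+p)e(\tau)\, d\tau\Bigr\|_X
\geq \Bigl\|\int_{t_b}^{t_c} RA(u^\ast+d^\delta_\alpha+p)e(t_a)\, d\tau\Bigr\|_X\\
-\int_{t_b}^{t_c}\|A(u^\ast+d^\delta_\alpha+p)\|_{Q\to X}\|e(\tau)-e(t_a)\|_Q\, d\tau\,.
\end{eqnarray*}
By Assumption \ref{ass:equation} with (\ref{AbdX}) and the definition of $\tilde{L}_A$ from the previous subsection (i.e.\ $\tilde{L}_A=C_A(1+\sup_{t>0}\{\|d^\delta_\alpha\|_X+\|p\|_X\})$, finite by Proposition \ref{well_defined_noisy}) we have $\|A(u^\ast+d^\delta_\alpha+p)\|_{Q\to X}\leq \tilde{L}_A$. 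Then, using the error equation (\ref{eq:smooth_err_q}), for $\tau\geq t_a$
$$\|e(\tau)-e(t_a)\|_Q \leq \int_{t_a}^{\tau}\|A(u^\ast+d^\delta_\alpha+p)^\ast r^\delta_\alpha\|_Q\, ds + \int_{t_a}^{\tau}\sigma\|\hat q\|_Q\, ds \leq \tilde{L}_A\int_{t_a}^\tau\|r^\delta_\alpha\|_X\, ds + \int_{t_a}^\tau \sigma\|\hat q\|_Q\, ds\,,$$
which, after substitution into the previous display, produces exactly the double integrals with coefficients $\tilde{L}_A^2$ and $\tilde{L}_A$ in the statement. Combining all estimates yields the asserted inequality.

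The main bookkeeping obstacle is simply that the extra $-\sigma\hat q$ term in $e_t$ needs to be carried through cleanly so that it contributes a separate $\tilde{L}_A\int\int \sigma\|\hat q\|_Q$ term rather than being absorbed; otherwise the argument is a direct translation of the proof of Lemma \ref{lem:est_state}, with $p$ replaced by $d^\delta_\alpha+p$, $r$ in the nested integral replaced by $r^\delta_\alpha$ (as this is what appears in the right-hand side of (\ref{eq:smooth_err_q})), and $L_A$ replaced by $\tilde{L}_A$.
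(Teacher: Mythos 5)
Your proposal is correct and follows exactly the route the paper takes: the paper's own proof is a one-sentence remark that the argument is the same as for Lemma \ref{lem:est_state} with $\|p\|_\Vhat$ replaced by $\|d^\delta_\alpha\|_\Vtil+\|p\|_\Vhat$, $\|r\|_X$ replaced by $\|r^\delta_\alpha\|_X$ in the $\tilde{L}_A$ term, and the additional $\sigma$ term carried through, and your write-up is precisely that argument spelled out in detail, including the correct handling of the $-\sigma\hat{q}$ contribution via integration of (\ref{eq:smooth_err_q}).
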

\begin{proof}
The proof is basically the same as in the exact data case with $\|d^\delta_\alpha\|_\Vtil+\|p\|_\Vhat$ instead of $\|p\|_\Vhat$ and $\|r^\delta_\alpha\|_X$ instead of $\|r\|_X$ in the term with $\tilde{L}_A$ and the additional term with $\sigma$.
\end{proof}
The persistence of excitation condition is nearly the same as in the exact data case, except that we have $u^\delta_\alpha$ instead of $Ru^\ast$, i.e., here we have the regularized version of the ``observed'' part of the state.
\begin{assumption}[Persistence of Excitation]\label{ass:persistence_noisy}
There are $T_0, \varepsilon_0, \gamma_0, \underline{t}>0$ such that for all $t_a \geq \underline{t}$, $\xi \in \partial B^Q_1(0)$ there exists a time instance $t_b\in [t_a,t_a+T_0]$ such that
$$
\|\int^{t_b+\gamma_0}_{t_b}RA(u^\ast+d^\delta_\alpha(\tau)+p(\tau))\xi d\tau\|_X\geq \varepsilon_0.
$$
\end{assumption}
Also the link conditions are quite similar. 
With a slightly different definition of theta
\begin{equation}
\label{thetatil}
\tilde{\theta}:=\mu \frac{\|r\|^2_\VXtil}{\|r_\alpha^\delta\|_\Vtil}
\end{equation}
and involving the error $d^\delta_\alpha$ between the exact ``observed'' part and its regularized version we use the following link conditions.
\begin{assumption}[Link conditions]\label{ass:link_noisy}
There exist $\tilde{\lambda}$, $\tilde{\kappa}\in [1,\infty)$, $T_{\tilde{\lambda}}$, $T_{\tilde{\kappa}}>0$ and $C_{\tilde{\lambda}}$, $C_{\tilde{\kappa}}>0$ such that for $\gamma_0 >0$ as in Assumption \ref{ass:persistence_noisy} the following holds. 

For all $t \geq T_{\tilde{\lambda}}$
$$
C_{\tilde{\lambda}}\geq \cases{ \left( \int^{t+\gamma_0}_t{\left(\frac{(\|d^\delta_\alpha(\tau)\|_\Vtil+\|p(\tau)\|_\Vhat)^{\tilde{\lambda}}}{\tilde{\theta}(\tau)}\right)^{\frac{1}{\tilde{\lambda}-1}}}d\tau \right)^\frac{\tilde{\lambda}-1}{\tilde{\lambda}} \, \quad \mbox{  if  } \tilde{\lambda}>1 \\ \sup_{\tau \in [t,t+\gamma_0]}{\frac{\|d^\delta_\alpha(\tau)\|_\Vtil+\|p(\tau)\|_\Vhat}{\tilde{\theta}(\tau)}} \, \quad \quad \mbox{  if  } \tilde{\lambda}=1.}
$$
For all $t \geq T_{\tilde{\kappa}}$  
$$
C_{\tilde{\kappa}}\geq \cases{ \left(\int^{t+\gamma_0}_t {\left(\frac{\mu^{\tilde{\kappa}}(\tau)}{\tilde{\theta}(\tau)}\right)^{\frac{1}{\tilde{\kappa}-1}}}d\tau\right)^{\frac{\tilde{\kappa}-1}{\tilde{\kappa}}} \, \quad \mbox{  if  } \tilde{\kappa}>1 \\ \sup_{\tau \in [t,t+\gamma_0]}{\frac{\mu(\tau)}{\tilde{\theta}(\tau)}} \, \quad \quad \mbox{  if  } \tilde{\kappa}=1. }
$$
\end{assumption}
Furtheron we just consider the case $\sigma=0$. In the other case $\sigma>0$ we cannot prove  parameter convergence.
The second lemma that is needed for parameter convergence is exactly the same as in the exact data case. (cf Lemma \ref{lemma1})
\begin{lemma}
\label{lemma1_noisy}
Let Assumptions \ref{ass:equation} with (\ref{AbdX}), \ref{ass:mu_nu}, \ref{ass:mu_noisy}, \ref{ass:persistence_noisy}, and \ref{ass:link_noisy} hold and $\sigma \equiv 0$.
Then, for any given $\gamma>0$, there are $\varepsilon>0$, $T>0$ and $T_1>0$ such that for all $t_1\geq T_1$ the following holds true:\\
If the parameter error $\|e(t_1)\|_Q \geq \gamma$, then there exists a $t_2\in [t_1,t_1+T]$ such that the state error $\|r(t_2)\|_X\geq \varepsilon$.
\end{lemma}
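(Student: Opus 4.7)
The plan is to mirror the proof of Lemma \ref{lemma1} in the exact data case, with the noisy-data counterparts of the ingredients substituted in. Given $\gamma>0$, I would select $T_0,\varepsilon_0,\gamma_0,\underline{t}$ according to Assumption \ref{ass:persistence_noisy}, set $T_1=\max\{\underline{t},\bar{t}\}$ with $\bar{t}$ to be fixed sufficiently large later, and assume $t_1\geq T_1$ with $\|e(t_1)\|_Q\geq\gamma$. Setting $\xi=e(t_1)/\|e(t_1)\|_Q$, I pick $t_b\in[t_1,t_1+T_0]$ by the persistence-of-excitation condition, apply Lemma \ref{lemma_integral} with $t_a=t_1$, $t_c=t_b+\gamma_0$, and define $t_2=t_c$, $T=T_0+\gamma_0$.

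Next, I would estimate the six summands on the right-hand side of Lemma \ref{lemma_integral}. The leading term is at least $\varepsilon_0\|e(t_1)\|_Q\geq\varepsilon_0\gamma$. Since $\sigma\equiv0$, the term containing $\|\hat{q}\|_Q$ vanishes. The term $\|r(t_b)\|_X$ and the double integral involving $\|r^\delta_\alpha\|_X$ are controlled by writing $\|r^\delta_\alpha\|_X\leq\|r\|_X+\|d^\delta_\alpha\|_X$ (by the identity (\ref{id2})) and invoking the noisy-data state convergence theorem together with the smallness of $d^\delta_\alpha$ implied by (\ref{cond_d}) and the coupling of $\alpha$ to $\delta$. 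Hölder's inequality combined with the link conditions of Assumption \ref{ass:link_noisy} bounds $L_C\int_{t_b}^{t_b+\gamma_0}(\|d^\delta_\alpha\|_\Vtil+\|p\|_\Vhat)\,d\tau$ by $L_C C_{\tilde{\lambda}}\bigl(\int\tilde\theta\bigr)^{1/\tilde\lambda}$, and similarly $C_M\int\mu\,d\tau$ by $C_M C_{\tilde{\kappa}}\bigl(\int\tilde\theta\bigr)^{1/\tilde\kappa}$, with $\tilde\theta$ as in (\ref{thetatil}).

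It then remains to bound $\int_{t_b}^{t_b+\gamma_0}\tilde\theta\,d\tau$. From (\ref{est4}) under Assumption \ref{ass:mu_nu} with $\sigma=0$ one obtains $\tfrac{c_M}{4}\int\tilde\theta\,d\tau\leq\tfrac{1}{2}\bigl[\|e\|_Q^2+\|r\|_X^2\bigr]_{t_b+\gamma_0}^{t_b}$, hence $\int\tilde\theta\leq\tfrac{2}{c_M}\bigl((\|e(t_b)\|_Q^2-\|e(t_b+\gamma_0)\|_Q^2)+\|r(t_b)\|_X^2\bigr)$. The parameter-norm difference is handled via the error equation (\ref{eq:smooth_err_q}) with $\sigma=0$, which yields $\|e(t_b)\|_Q^2-\|e(t_b+\gamma_0)\|_Q^2\leq 2\tilde{L}_A\int\|r^\delta_\alpha\|_X\,d\tau\cdot\sqrt{\|e(0)\|_Q^2+\|r(0)\|_X^2}$ by Proposition \ref{well_defined_noisy}. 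Splitting $\|r^\delta_\alpha\|_X\leq\|r\|_X+\|d^\delta_\alpha\|_X$ once more and choosing $\bar{t}$ large enough that both $\|r\|_X$ (by the noisy-data state convergence theorem) and $\|d^\delta_\alpha\|$ (by the regularization choice) are sufficiently small on $[\bar{t},\infty)$, all negative contributions can be forced into $[-\varepsilon_0\gamma/2,\,0]$. Thus $\|r(t_2)\|_X\geq\varepsilon_0\gamma/2$, and the assertion follows with $\varepsilon=\varepsilon_0\gamma/2$.

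The main obstacle relative to the exact-data case is that the $d^\delta_\alpha$ contributions pervade every estimate: unlike $r$, the perturbed error $r^\delta_\alpha$ need not vanish as $t\to\infty$, so one must systematically decompose $r^\delta_\alpha=r-d^\delta_\alpha$ and combine the already-proved state convergence of $\|r\|_X$ with the assumed smallness (\ref{cond_d}) of $d^\delta_\alpha$. Up to this additional bookkeeping, the argument is structurally identical to that of Lemma \ref{lemma1}.
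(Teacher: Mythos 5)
Your proposal is correct and follows essentially the same route as the paper: the paper's own proof of Lemma \ref{lemma1_noisy} consists of exactly the observation you spell out, namely that Lemma \ref{lemma_integral} with $\sigma\equiv 0$ reproduces the exact-data estimate after replacing $L_A$ by $\tilde{L}_A$ and $\|p\|_\Vhat$ by $\|p\|_\Vhat+\|d^\delta_\alpha\|_\Vtil$, so that with $\tilde{\theta}$ and the modified link conditions the argument of Lemma \ref{lemma1} carries over verbatim. You actually supply more detail than the paper does — in particular you explicitly flag that $r^\delta_\alpha=r-d^\delta_\alpha$ need not vanish and that the $d^\delta_\alpha$ contributions must be controlled via (\ref{cond_d}) and the choice of $\alpha$, a point the paper's two-sentence proof leaves entirely implicit.
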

\begin{proof}
In case $\sigma \equiv 0$ Lemma \ref{lemma_integral} with Assumption \ref{ass:persistence_noisy} gives the same estimate as in the exact data case with the only difference that we have to replace $L_A$ with $\tilde{L}_A$ and $\|p(\tau)\|_\Vhat$ by $(\|p(\tau)\|_\Vhat+\|d^\delta_\alpha\|_\Vtil)$. Thus with the adaptations we have made in the definition of $\tilde{\theta}$ and in the link conditions \ref{ass:link_noisy}, the proof obviously goes through like the one of Lemma \ref{lemma1}. 
\end{proof}

For $T^\ast=\infty$ we can prove parameter convergence analogously to Theorem \ref{th:parameter}.
\begin{theorem}[Parameter convergence]
\label{th:parameter_noisy}
Under Assumptions \ref{ass:equation} with (\ref{AbdX}), \ref{ass:mu_nu}, \ref{ass:mu_noisy}, \ref{ass:persistence_noisy}, and \ref{ass:link_noisy} and if $T^\ast=\infty$ (cf. (\ref{erp})), $\sigma \equiv 0$
we have that 
$$
\left\|\hat{q}(t)-q^*\right\|_Q\rightarrow 0\mbox{ as }t \rightarrow \infty\,.
$$
\end{theorem}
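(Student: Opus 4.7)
The plan is to proceed exactly in parallel with the proof of Theorem \ref{th:parameter}, using the noisy-data counterparts of the two ingredients: the state convergence theorem in the noisy setting (which applies precisely because we assume $T^\ast=\infty$ and $\sigma\equiv 0$) and Lemma \ref{lemma1_noisy}. The whole strategy is to convert the "parameter error large $\Rightarrow$ state error eventually large" implication of Lemma \ref{lemma1_noisy} into its contrapositive and then combine it with state convergence.

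First, I would fix $\gamma>0$ arbitrarily and invoke Lemma \ref{lemma1_noisy}, which, under exactly the hypotheses imposed in the theorem, yields constants $\varepsilon>0$, $T>0$, $T_1>0$ such that for every $t_1\geq T_1$, if $\|e(t_1)\|_Q\geq\gamma$ then there exists $t_2\in[t_1,t_1+T]$ with $\|r(t_2)\|_X\geq\varepsilon$. By contraposition this reads: for every $t_1\geq T_1$, if $\|r(t_2)\|_X<\varepsilon$ for all $t_2\in[t_1,t_1+T]$, then $\|e(t_1)\|_Q<\gamma$.

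Next, I would apply the noisy-data state convergence theorem (the one proved under Assumptions \ref{ass:equation} with (\ref{AbdVV}), \ref{ass:mu_noisy}, together with $T^\ast=\infty$ and $\sigma\equiv 0$; note that (\ref{AbdX}) implies (\ref{AbdVV}) and that Assumption \ref{ass:mu_noisy} is implied by the conjunction of Assumptions \ref{ass:mu_nu} and \ref{ass:mu_noisy} that we have in force). This gives $\|r(t)\|_X\to 0$ as $t\to\infty$. Hence there exists $T_2\geq T_1$ such that $\|r(t)\|_X<\varepsilon$ for every $t\geq T_2$. Combining with the contrapositive statement, for every $t_1\geq T_2$ the hypothesis of the contrapositive is satisfied on $[t_1,t_1+T]$, so $\|e(t_1)\|_Q<\gamma$. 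Since $\gamma>0$ was arbitrary, this proves $\|\hat q(t)-q^\ast\|_Q\to 0$.

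I do not anticipate any real obstacle here: all the heavy lifting (handling the extra noise term $d_\alpha^\delta$, the modified $\tilde\theta$, and the adjusted link conditions) has already been absorbed into Lemma \ref{lemma_integral} and Lemma \ref{lemma1_noisy}. The only thing to be careful about is that the noisy state convergence theorem requires exactly the side hypotheses $T^\ast=\infty$ and $\sigma\equiv 0$ which are assumed in the statement, and that the persistence of excitation / link conditions \ref{ass:persistence_noisy}, \ref{ass:link_noisy} (rather than their exact-data analogues) are what feed into Lemma \ref{lemma1_noisy}; both are indeed part of the hypothesis list, so the argument closes.
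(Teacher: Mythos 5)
Your proposal is correct and follows exactly the route the paper intends: the paper's proof of Theorem \ref{th:parameter_noisy} simply reads ``See exact data case,'' i.e.\ repeat the argument of Theorem \ref{th:parameter} with Lemma \ref{lemma1_noisy} in place of Lemma \ref{lemma1} and the noisy-data state convergence theorem (valid since $T^\ast=\infty$, $\sigma\equiv 0$, and (\ref{AbdX}) implies (\ref{AbdVV})) supplying the decay of $\|r(t)\|_X$. Your contraposition-plus-state-convergence argument is precisely that proof written out.
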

\begin{proof} See exact data case.
\end{proof}
\begin{remark}
In case $T^\ast<\infty$ we cannot prove parameter convergence, because in the persistence of excitation assumption we need to have $t\rightarrow  \infty$. 
\end{remark}
Since in case $T^\ast  < \infty$ we cannot completely prove convergence for the noisy data case we also take a look at the smoothed noisy data case.
\subsection{Convergence with smoothed noisy data}
With smoothed noisy data we denote $\zdel$ that is smooth with respect to time (which can be achieved by averaging over sufficiently large time intervals), i.e.
$$
\|(\zdel-z)_t(t)\|_Z=\|\zdel_t(t)-z_t(t)\|_Z\leq\tilde{\delta}(t)\,.
$$
That means for the error of the ``observed'' part of the state
$$
r_t=[r^\delta_\alpha+d^\delta_\alpha]_t=r^\delta_{\alpha t}+[G_\alpha z^\delta-G^\dagger z]_t = r^\delta_{\alpha t}+\tilde{d}^\delta_\alpha 
$$
with $\tilde{d}_\alpha^\delta:=G_\alpha(z^\delta_t-z_t)+(G_\alpha-G^\dagger)z_t+\alpha_t(\frac{d}{d \alpha} G_\alpha)z^\delta$.
Therewith the online parameter identification method as in Section \ref{sec:noisy} is given by
\begin{eqnarray}
\fl \hat{q_t}-A(u^\ast+d^\delta_\alpha+p)^\ast(R\hat{u}-u^\delta_\alpha)=0 \label{method_smooth_q}\\
\fl \hat{u}_t+C(\hat{q},u^\ast+d^\delta_\alpha+p)+\mu RM\frac{R\hat{u}-u^\delta_\alpha}{\|R\hat{u}-u^\delta_\alpha\|_\Vtil}+\nu PNP\hat{u}=f \label{method_smooth_u}\\
\fl (\hat{q},\hat{u})(0)=(\hat{q}_0,\hat{u}_0) \label{method_smooth_0}
\end{eqnarray}
Hence we can alternatively to (\ref{eq:smooth_err_q})-(\ref{eq:smooth_err_0}) consider the equations 
\begin{eqnarray}
\fl e_t-A(u^\ast+d_\alpha^\delta+p)^\ast r^\delta_\alpha=0 
 \nonumber \\
\fl r^\delta_{\alpha t}+\tilde{d}^\delta_\alpha+RC(q^\ast,u^\ast+d^\delta_\alpha+p)-RC(q^\ast,u^\ast)+RA(u^\ast+d^\delta_\alpha+p)e+\mu RM\frac{r^\delta_\alpha}{\|r^\delta_\alpha\|_\Vtil}=0 \label{eq:err_smooth_r} \\
\fl p_t+PC(q^\ast,u^\ast+d^\delta_\alpha +p)-PC(q^\ast,u^\ast)+PA(u^\ast+d^\delta_\alpha+p)e+\nu PNP\hat{u}=0 \nonumber \\
\fl (e,r^\delta_\alpha,p)(0)=(\hat{q}_0-q^\ast,R\hat{u}(0)-G_\alpha z^\delta(0),P(\hat{u}_0-u_0))\,, \nonumber
\end{eqnarray}
that upon the replacements $r \mapsto r^\delta_\alpha$, $u^\ast \mapsto u^\ast+d^\delta_\alpha$ and up to the perturbation $\tilde{d}^\delta_\alpha$ in (\ref{eq:err_smooth_r}) are the same as (\ref{eq:errorsys_e}) - (\ref{eq:errorsys_p}).

Here and below we set $\sigma\equiv0$. Since now we only deal with $r^\delta_\alpha$ (and not with $r$, $r^\delta_\alpha$ simultaneously as in the previous section) proofs become much more analogous to the exact data case. 
\subsubsection{Well-definedness}
For proving the well-definedness, again some lower bounds on $\mu$ and $\nu$ are required.
\begin{assumption}\label{ass:mu_nu_smooth} For all $t>0$ 
\begin{enumerate}
\item $$\fl \mu(t) \geq \frac{2}{c_M}(L_C(\|d^\delta_\alpha(t)\|_\Vtil+\|p(t)\|_\Vhat)+\|\tilde{d}^\delta_\alpha(t)\|_X)\frac{\|r^\delta_\alpha(t)\|_X \|r^\delta_\alpha(t)\|_\Vtil}{\|r^\delta_\alpha(t)\|^2_\VXtil};$$ 
\item \begin{eqnarray*}
\fl \nu(t)\geq \max \Big\{\underline{\nu}, \Big( \frac{4(L_C+C_A\|e(t)\|_Q)}{c_N}(\|p(t)\|_\Vhat+\|d^\delta_\alpha\|_\Vtil)\\
+\frac{2 C_A C_{\Vhat \VXhat}C_{\VXhat X}}{c_N}\|p(t)\|_\Vhat\Big)\frac{\|p(t)\|_X}{\|p(t)|^2_\VXhat}\Big\}.
\end{eqnarray*} 
\end{enumerate}
\end{assumption}
Therewith we get a similar well-posedness result as in the previous section.
The critical difference to Section 3.2 is that we get existence for all times.  
\begin{proposition}\label{prop:wellposed_noisesmooth}
Let Assumptions \ref{ass:equation} with (\ref{AbdVV}) and \ref{ass:mu_nu_smooth} hold, and let
$(\hat{q}_0-q^*,\hat{u}_0-u_0)\in Q\times(\Vtil+\Vhat)$.
Then there exists a solution $(\hat{q}(t),\hat{u}(t))\in Q\times(\Vtil+\Vhat)$ of (\ref{method_smooth_q}), (\ref{method_smooth_u}), (\ref{method_smooth_0}) for all times $t>0$ and satisfies the following error bounds (cf. (\ref{erp})).
\begin{enumerate}
\item For all $t>0$ :  $\|e\|_Q^2+\|r_\alpha^\delta\|_X^2\leq\|e(0)\|_Q^2+\|r_\alpha^\delta(0)\|_X^2$
\item For all $t>0$ : $\|p(t)\|_X^2\leq \max\Big\{\|p(0)\|_X^2,
\frac{C_N^2 C_{\VXhat X}^4}{c_N^2}\sup_{t>0}\|Pu^\ast(t)\|_\Vhat^2$\\
\hspace*{5cm}$+\frac{C_A C_{\VXhat X}^2}{\underline{\nu}c_N^2}(\|e(0)\|_Q^2+\|r_\alpha^\delta(0)\|_X^2)
\Big\}$;
\item  $\int_0^\infty \|p(t)\|_{\Vhat} \|r_\alpha^\delta(t)\|_X \, dt \leq \frac{\|e(0)\|_Q^2+\|r_\alpha^\delta(0)\|_X^2}{2L_C}<\infty$.
\end{enumerate}
\end{proposition}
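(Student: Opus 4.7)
The plan is to mimic the exact data proof of Proposition \ref{prop:welldefine}, exploiting the fact (pointed out in the paragraph preceding the statement) that the error system (\ref{eq:err_smooth_r}) differs from (\ref{eq:errorsys_e})--(\ref{eq:errorsys_p}) only by the substitutions $r \mapsto r^\delta_\alpha$, $u^\ast \mapsto u^\ast+d^\delta_\alpha$, and by the additional perturbation $\tilde{d}^\delta_\alpha$ in the $r^\delta_\alpha$-equation. Accordingly, I would first test the $e$-equation and the $r^\delta_\alpha$-equation with $e$ and $r^\delta_\alpha$ respectively and add. By design of the adjoint term $A(u^\ast+d^\delta_\alpha+p)^\ast r^\delta_\alpha$ in the $\hat{q}$-update, the two $A$-contributions cancel exactly as in (\ref{align:neplusnr}), so that after integrating over $[t_1,t_2]$ and invoking Assumption \ref{ass:equation} and the coercivity of $M$ one obtains
\begin{eqnarray*}
\fl \tfrac{1}{2}\left[\|e\|_Q^2+\|r^\delta_\alpha\|_X^2\right]_{t_1}^{t_2} \leq \int_{t_1}^{t_2}\Big\{L_C(\|d^\delta_\alpha\|_\Vtil+\|p\|_\Vhat)\|r^\delta_\alpha\|_X+\|\tilde{d}^\delta_\alpha\|_X\|r^\delta_\alpha\|_X-c_M\mu\,\tfrac{\|r^\delta_\alpha\|_\VXtil^2}{\|r^\delta_\alpha\|_\Vtil}\Big\}\,d\tau .
\end{eqnarray*}
The lower bound on $\mu$ in Assumption \ref{ass:mu_nu_smooth}(i) is precisely tailored so that the dissipative term on the right absorbs twice the first two terms, leaving behind the negative quantity $-L_C(\|d^\delta_\alpha\|_\Vtil+\|p\|_\Vhat)\|r^\delta_\alpha\|_X-\|\tilde d^\delta_\alpha\|_X\|r^\delta_\alpha\|_X$. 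This immediately yields assertion (i) and, using the first of these two terms with $\|p\|_\Vhat\|r^\delta_\alpha\|_X$, also item (iii).

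For item (ii) I would test (\ref{eq:smooth_err_p}) with $p$, use the Lipschitz condition (\ref{eq:LipschitzC}) on the shifted argument $u^\ast+d^\delta_\alpha+p$, apply the growth bound (\ref{AbdVV}) on $A$ together with Young's inequality, and use coercivity of $N$ combined with the split $P\hat{u}=p+Pu^\ast$ and the estimate (\ref{est1}). This gives an inequality structurally identical to (\ref{estp}) up to the presence of the extra $\|d^\delta_\alpha\|_\Vtil$ terms. The choice of $\nu$ in Assumption \ref{ass:mu_nu_smooth}(ii) is designed to dominate the resulting mixed term $\|p\|_X\|p\|_\Vhat$, so that
\begin{eqnarray*}
\tfrac{d}{dt}\tfrac{1}{2}\|p\|_X^2 \leq -\tfrac{\nu c_N}{4}\|p\|_\VXhat^2 +\tfrac{\nu C_N^2 C_{\VXhat X}^2}{2 c_N}\|Pu^\ast\|_\Vhat^2 +\tfrac{C_A}{2}\|e\|_Q^2 .
\end{eqnarray*}

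Finally I would perform the same time change of variables as in the exact data case, setting $\tau(t):=\tfrac{c_N}{2 C_{\VXhat X}^2}\int_0^t\nu(\xi)\,d\xi$ and $\tilde{\mathcal{V}}(\tau(t)):=\tfrac{1}{2}\|p(t)\|_X^2$, to obtain $\tfrac{d}{d\tau}\tilde{\mathcal{V}}\leq -\tilde{\mathcal{V}}+b$ with $b=\tfrac{C_N^2 C_{\VXhat X}^4}{c_N^2}\sup_t\|Pu^\ast\|_\Vhat^2+\tfrac{C_A C_{\VXhat X}^2}{c_N\underline{\nu}}\sup_t\|e\|_Q^2$. The scalar implication $\eta'\leq -\eta+b \Rightarrow \eta\leq\max\{b,\eta(0)\}$, combined with the bound on $\|e\|_Q^2$ from item (i), then produces assertion (ii). The main technical obstacle, as before, is the proper bookkeeping needed to ensure that the two lower bounds in Assumption \ref{ass:mu_nu_smooth} exactly match the coefficients appearing after applying Cauchy--Schwarz and Young; the genuinely new point compared with Proposition \ref{prop:welldefine} is the perturbation $\tilde{d}^\delta_\alpha$, whose contribution is precisely the reason for the extra summand $\|\tilde{d}^\delta_\alpha\|_X$ in Assumption \ref{ass:mu_nu_smooth}(i). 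No condition analogous to (\ref{cond_d}) is needed because here we measure the state error directly in $r^\delta_\alpha$ rather than $r$, which is what allows the existence statement to hold for all $t>0$.
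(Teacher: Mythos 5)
Your proposal is correct and follows exactly the route the paper intends: the paper omits an explicit proof of this proposition, relying on its remark that, after the substitutions $r\mapsto r^\delta_\alpha$, $u^\ast\mapsto u^\ast+d^\delta_\alpha$ and up to the perturbation $\tilde d^\delta_\alpha$, the error system coincides with the exact-data one, and your argument is precisely the resulting adaptation of the proofs of Propositions \ref{prop:welldefine} and \ref{well_defined_noisy} (including the correct observation that the extra summand $\|\tilde d^\delta_\alpha\|_X$ in Assumption \ref{ass:mu_nu_smooth} absorbs the perturbation and that no analogue of (\ref{cond_d}) is needed, which is what yields existence for all $t>0$). Minor constant discrepancies (factors of $2$ and powers of $c_N$ in item (ii)) are of the same kind as those already present between the paper's stated bounds and its own derivations, and do not affect correctness.
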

\subsubsection{State convergence}
To obtain state convergence we have to replace the parameter choice from the exact data case in a straightforward manner
with replacements $\|p\|_\Vhat \mapsto \|p\|_\Vhat+\|d^\delta_\alpha\|_\Vtil +\|\tilde{d}^\delta_\alpha\|_\Vtil+\frac{\|\tilde{d}^\delta_\alpha\|_X}{2L_C}$ and $r \mapsto r^\delta_\alpha$. In case of smoothed noisy data we do not need conditions on $\|d^\delta_\alpha\|_\Vtil$ and therefore we can prove state convergence to $0$ as $t \rightarrow \infty$.
\begin{assumption}\label{cond:mu_smooth_2}
 There exists a constant $c_1>0$ such that for all $t>0$
$$
\mu(t)\geq \max\left\{
\frac{2}{c_M} (L_C(\|d_\alpha^\delta\|_{\Vtil}+\|p\|_{\Vhat})+\|\tilde{d}^\delta_\alpha\|_X)\,, \ c_1 \|r_\alpha^\delta(t)\|_X\right\}
\frac{\|r_\alpha^\delta(t)\|_X\|r_\alpha^\delta(t)\|_{\Vtil} }{\|r_\alpha^\delta(t)\|_{\VXtil}^2}. 
$$
\end{assumption}
\begin{theorem}[State convergence]
\label{th:state_noisesmooth}
Under Assumptions \ref{ass:equation} with (\ref{AbdVV}), \ref{ass:mu_nu_smooth} and \ref{cond:mu_smooth_2}
we have that $\left\|R\hat{u}(t)-G_\alpha z^\delta(t)\right\|_X=\|r^\delta_\alpha\|_X \rightarrow 0$ as $t \rightarrow \infty$.
\end{theorem}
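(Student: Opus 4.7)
The plan is to mimic the exact-data proof of Theorem \ref{th:state} with the substitutions $r \mapsto r^\delta_\alpha$, $u^\ast \mapsto u^\ast+d^\delta_\alpha$, and $L_A \mapsto \tilde L_A$, while absorbing the extra forcing $\tilde d^\delta_\alpha$ (coming from the smoothed, noisy data) into the damping term supplied by $\mu$. Conceptually nothing new happens: Assumption \ref{cond:mu_smooth_2} was tailored so that the energy identity yields both monotone decay and an $L^2$-in-time bound on $r^\delta_\alpha$, after which the same contradiction argument as in Theorem \ref{th:state} applies verbatim.

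First I would establish a local Lipschitz-type control for $\|r^\delta_\alpha\|_X^2$. Testing (\ref{eq:err_smooth_r}) with $r^\delta_\alpha$ and using Assumption \ref{ass:equation} together with the bound $\tilde L_A:=C_A(1+\sup_{t>0}\{\|d^\delta_\alpha\|_X+\|p\|_X\})<\infty$ guaranteed by Proposition \ref{prop:wellposed_noisesmooth}, I obtain
\begin{eqnarray*}
\|r^\delta_\alpha(t_2)\|_X^2-\|r^\delta_\alpha(t_1)\|_X^2
\leq \int_{t_1}^{t_2}\Bigl\{\tfrac{\tilde L_A}{2}(\|e\|_Q^2+\|r^\delta_\alpha\|_X^2)-L_C(\|d^\delta_\alpha\|_\Vtil+\|p\|_\Vhat)\|r^\delta_\alpha\|_X\\
\quad -(\tilde d^\delta_\alpha,r^\delta_\alpha)_X -\tfrac{\mu c_M}{2}\tfrac{\|r^\delta_\alpha\|_\VXtil^2}{\|r^\delta_\alpha\|_\Vtil}\Bigr\}\,d\tau
\leq c_2(t_2-t_1)
\end{eqnarray*}
with $c_2:=\tfrac{\tilde L_A}{2}(\|e(0)\|_Q^2+\|r^\delta_\alpha(0)\|_X^2)$, by Proposition \ref{prop:wellposed_noisesmooth}. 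As in (\ref{ineq:gamma}), this implies $\int_{t-\gamma}^t\|r^\delta_\alpha(\tau)\|_X^2\,d\tau\geq \gamma\|r^\delta_\alpha(t)\|_X^2-c_2\gamma^2/2$ for every $t,\gamma>0$.

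Next I would combine the two energy identities for $e$ and $r^\delta_\alpha$. Because (\ref{method_smooth_q}) was designed to cancel the $A(\cdot)e$ cross term, summing $(e_t,e)_Q$ and $(r^\delta_{\alpha t},r^\delta_\alpha)_X$ leaves only the Lipschitz-$C$ contribution, the perturbation $(\tilde d^\delta_\alpha,r^\delta_\alpha)_X$, and the damping $-\mu c_M\|r^\delta_\alpha\|_\VXtil^2/\|r^\delta_\alpha\|_\Vtil$. The first piece of the maximum in Assumption \ref{cond:mu_smooth_2} is precisely the bound needed to have the damping swallow the Lipschitz and $\tilde d^\delta_\alpha$ terms (up to a factor of two), and the second piece (with $c_1$) then yields
$$
\tfrac{d}{dt}\tfrac{1}{2}(\|e\|_Q^2+\|r^\delta_\alpha\|_X^2)\leq -\tfrac{c_M c_1}{2}\|r^\delta_\alpha\|_X^2,
$$
so that $\int_0^\infty\|r^\delta_\alpha\|_X^2\,dt\leq (\|e(0)\|_Q^2+\|r^\delta_\alpha(0)\|_X^2)/(c_M c_1)<\infty$.

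Finally I would close the argument by contradiction in the same way as in Theorem \ref{th:state}: if $\|r^\delta_\alpha(t)\|_X\not\to0$, pick $\varepsilon>0$ and a sequence $t_i\to\infty$ with $\|r^\delta_\alpha(t_i)\|_X^2\geq\varepsilon$, then thin it so that consecutive $t_{i_j}$ are at least $\varepsilon/c_2$ apart; the local Lipschitz estimate gives $\int_{t_{i_j}-\varepsilon/c_2}^{t_{i_j}}\|r^\delta_\alpha\|_X^2\,d\tau\geq\varepsilon^2/(2c_2)$ on each disjoint interval, and summing contradicts the finite integral. The main effort is just bookkeeping for $\tilde d^\delta_\alpha$; the main obstacle (absent in the exact-data case) is ensuring that the two-piece maximum in Assumption \ref{cond:mu_smooth_2} simultaneously dominates the Lipschitz term, the perturbation $\|\tilde d^\delta_\alpha\|_X\|r^\delta_\alpha\|_X$, and yields the $\|r^\delta_\alpha\|_X^2$ coercivity needed for integrability. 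Both requirements hold, by construction, under Assumption \ref{cond:mu_smooth_2}.
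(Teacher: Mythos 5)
Your proof is correct and is exactly the argument the paper intends: the paper gives no written proof of Theorem \ref{th:state_noisesmooth}, saying only that one repeats the exact-data proof of Theorem \ref{th:state} with $r\mapsto r^\delta_\alpha$ and $\|p\|_\Vhat$ replaced by $\|p\|_\Vhat+\|d^\delta_\alpha\|_\Vtil+\|\tilde d^\delta_\alpha\|_X/(2L_C)$ so that Assumption \ref{cond:mu_smooth_2} lets the damping absorb both the Lipschitz and the $\tilde d^\delta_\alpha$ contributions --- which is precisely the bookkeeping you carry out before reusing the $L^2$-in-time bound and the contradiction argument. The only cosmetic wrinkle is that you state $\tilde L_A$ with $X$-norms as under (\ref{AbdX}) although the theorem assumes only (\ref{AbdVV}); the paper's own definition of $\tilde L_A$ has the same mismatch, and it does not affect the argument.
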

%
\subsubsection{Parameter convergence}
Due to the inhomogeneity $\tilde{d}_\alpha^\delta$ in the right hand side of the equation for the ``observed'' state error (\ref{eq:err_smooth_r}) the crucial estimate for parameter convergence becomes
\begin{lemma}\label{lem:est_state_noisesmooth}
Under Assumption \ref{ass:equation} with (\ref{AbdVV}), the projected state errors $r_\alpha^\delta=R\hat{u}-G_\alpha z^\delta$ and $p$ satisfy the following relation for all $0<t_a \leq t_b \leq t_c:$
\begin{eqnarray*} 
\fl \|r_\alpha^\delta(t_c)\|_X \geq
\|\int^{t_c}_{t_b} RA(u^\ast(\tau)+d^\delta_\alpha(\tau)+p(\tau))e(t_a)d\tau \|_X -\|r_\alpha^\delta(t_b)\|_X 
-\int_{t_b}^{t_c}\|\tilde{d}_\alpha^\delta(\tau)\|_X\, d\tau \\
\fl-\tilde{L}_A^2\int_{t_b}^{t_c}\int_{t_a}^\tau \|r_\alpha^\delta(s)\|_X\, ds d\tau -L_C \int_{t_b}^{t_c}(\|d_\alpha^\delta(\tau)\|_\Vtil+\|p(\tau)\|_\Vhat)\, d\tau -C_M \int_{t_b}^{t_c} \mu(\tau)\, d\tau 
\end{eqnarray*}
\end{lemma}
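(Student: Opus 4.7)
The plan is to mirror the proof of Lemma \ref{lem:est_state}, accounting for three modifications forced by the smoothed noisy data setting: the replacement $r \mapsto r_\alpha^\delta$, the shift $u^\ast \mapsto u^\ast + d_\alpha^\delta$ in the Lipschitz estimate on $C$, and the extra inhomogeneity $\tilde{d}_\alpha^\delta$ appearing on the right-hand side of the evolution for $r_\alpha^\delta$ in (\ref{eq:err_smooth_r}). Also, $L_A$ becomes $\tilde{L}_A$, reflecting that $u^\ast$ is perturbed by $d_\alpha^\delta$ inside $A$ and that $A$ is bounded via (\ref{AbdVV}) together with Proposition \ref{prop:wellposed_noisesmooth}.

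First, I would integrate (\ref{eq:err_smooth_r}) from $t_b$ to $t_c$ to obtain
\begin{eqnarray*}
\fl r_\alpha^\delta(t_c)-r_\alpha^\delta(t_b) = -\int_{t_b}^{t_c}\Big\{\tilde{d}_\alpha^\delta + R[C(q^\ast,u^\ast+d_\alpha^\delta+p)-C(q^\ast,u^\ast)] \\
\fl \hspace*{4cm}+ RA(u^\ast+d_\alpha^\delta+p)e + \mu RM\frac{r_\alpha^\delta}{\|r_\alpha^\delta\|_\Vtil}\Big\}\,d\tau,
\end{eqnarray*}
then apply the reverse triangle inequality on the left and the triangle inequality on the right to obtain
$$
\|r_\alpha^\delta(t_c)\|_X + \|r_\alpha^\delta(t_b)\|_X \geq \Big\|\int_{t_b}^{t_c}\! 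RA(u^\ast+d_\alpha^\delta+p)e\,d\tau\Big\|_X - \int_{t_b}^{t_c}\!(\cdots)\,d\tau,
$$
where $(\cdots)$ collects the remaining three integrand norms. The Lipschitz condition (\ref{eq:LipschitzC}) on $C$ with second argument perturbation $d_\alpha^\delta + p$ controls the $C$-difference by $L_C(\|d_\alpha^\delta\|_\Vtil + \|p\|_\Vhat)$; the estimate $\|RMv\|_X \leq C_M \|v\|_\Vtil$ from Assumption \ref{ass:equation} bounds the $\mu$-term by $C_M \mu$; and $\|\tilde{d}_\alpha^\delta\|_X$ is kept as is, producing the new term in the statement.

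Next, I would peel off the constant $e(t_a)$ from the $A$-integral by writing $e(\tau) = e(t_a) + (e(\tau) - e(t_a))$, yielding
$$
\Big\|\!\int_{t_b}^{t_c}\! RA(u^\ast+d_\alpha^\delta+p)e\,d\tau\Big\|_X \geq \Big\|\!\int_{t_b}^{t_c}\! RA(u^\ast+d_\alpha^\delta+p)e(t_a)\,d\tau\Big\|_X - \!\int_{t_b}^{t_c}\! \tilde{L}_A \|e(\tau)-e(t_a)\|_Q\,d\tau,
$$
using the bound on $\|A\|_{Q\to X}$ from (\ref{AbdVV}) together with boundedness of $\|d_\alpha^\delta\|_X + \|p\|_X$ supplied by Proposition \ref{prop:wellposed_noisesmooth}. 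Since $\sigma \equiv 0$, the equation for $e_t$ reads $e_t = A(u^\ast + d_\alpha^\delta + p)^\ast r_\alpha^\delta$, which (again by the bound on $\|A\|$) gives $\|e(\tau)-e(t_a)\|_Q \leq \tilde{L}_A \int_{t_a}^\tau \|r_\alpha^\delta(s)\|_X\,ds$. Combining all estimates yields exactly the claim.

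I do not expect a serious obstacle here: the argument is entirely parallel to Lemma \ref{lem:est_state}. The only delicate bookkeeping is ensuring that the constant $\tilde{L}_A$ indeed controls $\|A(u^\ast + d_\alpha^\delta + p)\|_{Q\to X}$ uniformly in $t$, which is why Assumption \ref{ass:equation} with the variant (\ref{AbdVV}) together with the uniform-in-$t$ bound on $\|p\|_X$ (and the implicit assumption that $\|d_\alpha^\delta\|_\Vtil$ stays bounded by the regularization) is the right hypothesis to invoke.
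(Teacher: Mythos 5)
Your proposal is correct and follows exactly the route the paper intends: the paper states this lemma without proof precisely because it is the word-for-word analogue of Lemma \ref{lem:est_state} (and of Lemma \ref{lemma_integral}) with the replacements $r\mapsto r_\alpha^\delta$, $u^\ast\mapsto u^\ast+d_\alpha^\delta$, $L_A\mapsto\tilde L_A$ and the extra $\tilde d_\alpha^\delta$ term carried through the triangle inequality, which is what you do. The only caveat you correctly flag — whether $\tilde L_A$ (defined via $X$-norms as in (\ref{AbdX})) is the constant delivered by the hypothesis (\ref{AbdVV}), which bounds $\|A\|_{Q\to X}$ in terms of $\|p\|_{\hat V}$ and $\|d_\alpha^\delta\|_{\tilde V}$ rather than $\|p\|_X+\|d_\alpha^\delta\|_X$ — is an inconsistency already present in the paper's own statement, not a defect of your argument.
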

To obtain parameter convergence we use the persistence of excitation and link conditions, Assumptions \ref{ass:persistence_noisy} and \ref{ass:link_noisy} with the only slight modification as compared to (\ref{thetatil})
$$\theta=\theta^\delta=\mu \frac{\|r_\alpha^\delta\|_{\VXtil}^2}{\|r_\alpha^\delta\|_{\Vtil} }.$$

Therewith we obtain:
\begin{lemma}
\label{lemma1_noisesmooth}
Let Assumptions \ref{ass:equation}, \ref{ass:persistence_noisy}, \ref{ass:link_noisy}, \ref{ass:mu_nu_smooth}, and \ref{cond:mu_smooth_2} hold.\\
Then, for any given $\gamma>\frac{2\gamma_0}{\varepsilon_0}\sup_{t>0}\|\tilde{d}_\alpha^\delta(t)\|_X$, there are $\varepsilon>0$, $T>0$ and $T_1>0$ such that for all $t_1\geq T_1$ the following holds true:\\
If the parameter error $\|e(t_1)\|_Q \geq \gamma$, then there exists a $t_2\in [t_1,t_1+T]$ such that the state error $\|r^\delta_\alpha(t_2)\|_X\geq \varepsilon$.
\end{lemma}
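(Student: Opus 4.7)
The plan is to imitate the argument of Lemma \ref{lemma1}, now starting from the refined lower bound provided by Lemma \ref{lem:est_state_noisesmooth}. Given $\gamma > \frac{2\gamma_0}{\varepsilon_0} \sup_{t>0}\|\tilde{d}_\alpha^\delta(t)\|_X$ and $t_1 \geq T_1 := \max\{\underline{t},\bar{t}\}$ (with $\bar{t}$ chosen sufficiently large below) satisfying $\|e(t_1)\|_Q \geq \gamma$, I would set $\xi := e(t_1)/\|e(t_1)\|_Q$, choose $t_b \in [t_1, t_1+T_0]$ via Assumption \ref{ass:persistence_noisy}, and apply Lemma \ref{lem:est_state_noisesmooth} with $t_a = t_1$, $t_c = t_b+\gamma_0$. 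Setting $t_2 := t_c$ and $T := T_0+\gamma_0$, the persistence of excitation inequality supplies the dominant positive contribution $\varepsilon_0 \|e(t_1)\|_Q \geq \varepsilon_0 \gamma$.

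Next I would control the five negative terms. The genuinely new one, compared to the exact data case, is
\[
\int_{t_b}^{t_b+\gamma_0}\|\tilde{d}_\alpha^\delta(\tau)\|_X\, d\tau \leq \gamma_0 \sup_{t>0}\|\tilde{d}_\alpha^\delta(t)\|_X,
\]
which by the standing assumption on $\gamma$ is bounded by $\varepsilon_0\gamma/2$, so that at least half of the excitation gain survives. The four remaining terms $\|r_\alpha^\delta(t_b)\|_X$, $\tilde{L}_A^2 \int_{t_b}^{t_b+\gamma_0}\!\int_{t_1}^\tau \|r_\alpha^\delta(s)\|_X\,ds\,d\tau$, $L_C \int_{t_b}^{t_b+\gamma_0}(\|d_\alpha^\delta\|_\Vtil + \|p\|_\Vhat)\,d\tau$ and $C_M\int_{t_b}^{t_b+\gamma_0}\mu\,d\tau$ I would handle exactly as in the proof of Lemma \ref{lemma1}, with $r_\alpha^\delta$ in the role of $r$ and $\theta^\delta$ (defined just before the statement) in the role of $\theta$. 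Concretely, H\"older's inequality combined with Assumption \ref{ass:link_noisy} reduces the last two terms to powers of $\int_{t_b}^{t_b+\gamma_0}\theta^\delta\,d\tau$, which in turn is controlled by a telescoping expression in $\|e\|_Q^2 + \|r_\alpha^\delta\|_X^2$ via the $\sigma\equiv 0$ analogue of (\ref{eq:est_er2}); the $e$-increment appearing there is bounded by integrating the $e_t$-equation and invoking Proposition \ref{prop:wellposed_noisesmooth}, while the double integral is bounded crudely by $\gamma_0 T \sup_{\sigma\geq t_1}\|r_\alpha^\delta(\sigma)\|_X$.

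By Theorem \ref{th:state_noisesmooth} we have $\|r_\alpha^\delta(t)\|_X \to 0$ as $t \to \infty$, so choosing $\bar{t}$ (and hence $T_1$) large enough forces the combined magnitude of these four remaining contributions to be at most $\varepsilon_0\gamma/4$ for every $t_1 \geq T_1$. Putting the pieces together yields
\[
\|r_\alpha^\delta(t_2)\|_X \geq \varepsilon_0\gamma - \gamma_0\sup_{t>0}\|\tilde{d}_\alpha^\delta(t)\|_X - \frac{\varepsilon_0\gamma}{4} \geq \frac{\varepsilon_0\gamma}{4} =: \varepsilon,
\]
which is the claim.

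The main obstacle here is precisely the non-vanishing bias term $\int \|\tilde{d}_\alpha^\delta\|_X\,d\tau$. Unlike the $r_\alpha^\delta$-dependent terms, it does not decay with time: it reflects an irreducible error coming from the time-derivative of the smoothed noisy data and is insensitive to how large $t_1$ is taken. This forces the lemma to exclude small $\gamma$, and the threshold $\frac{2\gamma_0}{\varepsilon_0}\sup_{t>0}\|\tilde{d}_\alpha^\delta(t)\|_X$ is exactly the one that guarantees the excitation gain $\varepsilon_0\gamma$ outpaces this residual noise floor, leaving enough room to absorb the decaying $r_\alpha^\delta$-terms by waiting long enough.
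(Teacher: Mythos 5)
Your proposal is correct and follows exactly the argument the paper intends: the paper omits a written proof of Lemma \ref{lemma1_noisesmooth}, but the evident plan is to repeat the proof of Lemma \ref{lemma1} with Lemma \ref{lem:est_state_noisesmooth} in place of Lemma \ref{lem:est_state}, $r_\alpha^\delta$, $\tilde{L}_A$, $\theta^\delta$ in place of $r$, $L_A$, $\theta$, and the new bias term $\int_{t_b}^{t_b+\gamma_0}\|\tilde{d}_\alpha^\delta\|_X\,d\tau\leq\gamma_0\sup_{t>0}\|\tilde{d}_\alpha^\delta(t)\|_X<\varepsilon_0\gamma/2$ absorbed by the lower bound imposed on $\gamma$, which is precisely what you do. Your identification of why the threshold on $\gamma$ is needed (the $\tilde{d}_\alpha^\delta$-term does not decay in time, unlike the $r_\alpha^\delta$-dependent terms) and your final budget $\varepsilon=\varepsilon_0\gamma/4$ are both sound.
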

%
This allows us to conclude:
\begin{theorem}[Parameter convergence]
\label{th:parameter_noisesmooth}
Under Assumptions \ref{ass:equation}, \ref{ass:persistence_noisy}, \ref{ass:link_noisy}, \ref{ass:mu_nu_smooth}, and \ref{cond:mu_smooth_2}
we have that 
$$
\limsup_{t\to\infty}\left\|\hat{q}(t)-q^*\right\|_Q\leq \frac{2\gamma_0}{\varepsilon_0}\sup_{t>0}\|\tilde{d}_\alpha^\delta(t)\|_X\,.
$$
\end{theorem}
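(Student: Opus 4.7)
The plan is to follow exactly the contraposition argument used to prove Theorem \ref{th:parameter} in the exact data case, but now combined with the smoothed-noisy-data versions of the key ingredients: Lemma \ref{lemma1_noisesmooth} in place of Lemma \ref{lemma1}, and the state convergence Theorem \ref{th:state_noisesmooth} in place of Theorem \ref{th:state}. The only new feature is the non-vanishing asymptotic bound on the right-hand side, which is exactly the threshold value $\gamma_{\min}:=\frac{2\gamma_0}{\varepsilon_0}\sup_{t>0}\|\tilde{d}_\alpha^\delta(t)\|_X$ appearing in the admissible range of $\gamma$ in Lemma \ref{lemma1_noisesmooth}.

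First, I would restate the contrapositive of Lemma \ref{lemma1_noisesmooth}: for every $\gamma>\gamma_{\min}$ there exist $\varepsilon,T,T_1>0$ such that whenever $t_1\geq T_1$ and $\|r_\alpha^\delta(t_2)\|_X<\varepsilon$ for every $t_2\in[t_1,t_1+T]$, one has $\|e(t_1)\|_Q<\gamma$. This is the exact analogue of the step used at the beginning of the proof of Theorem \ref{th:parameter}; since Lemma \ref{lemma1_noisesmooth} is only valid for $\gamma$ above the threshold $\gamma_{\min}$, the contrapositive is correspondingly restricted.

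Second, I would fix an arbitrary $\gamma>\gamma_{\min}$, choose the associated $\varepsilon,T,T_1$ from the lemma, and then invoke Theorem \ref{th:state_noisesmooth} to produce $T_2\geq T_1$ such that $\|r_\alpha^\delta(t)\|_X<\varepsilon$ for all $t\geq T_2$. For every $t_1\geq T_2$ the whole interval $[t_1,t_1+T]$ then lies in the regime where the state error is below $\varepsilon$, so the contrapositive yields $\|e(t_1)\|_Q<\gamma$. Hence $\limsup_{t\to\infty}\|e(t)\|_Q\leq\gamma$, and since $\gamma>\gamma_{\min}$ was arbitrary, $\limsup_{t\to\infty}\|\hat{q}(t)-q^*\|_Q\leq\gamma_{\min}$, which is the claimed bound.

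I do not expect any real obstacle here, because the work has been shifted into the preceding results. The only thing to double check is that the three assumptions invoked (Assumptions \ref{ass:persistence_noisy}, \ref{ass:link_noisy}, \ref{ass:mu_nu_smooth}, \ref{cond:mu_smooth_2}, together with Assumption \ref{ass:equation} under the growth condition (\ref{AbdVV})) are precisely those required for both Lemma \ref{lemma1_noisesmooth} and Theorem \ref{th:state_noisesmooth}, so that the two can be chained in the same $t_1$; inspection of the statements shows that this is the case, so the proof can simply be summarized as ``See exact data case,'' with the extra remark that the threshold $\gamma_{\min}$ enters as the best $\gamma$ for which Lemma \ref{lemma1_noisesmooth} can be applied, thereby producing the residual term on the right-hand side of the theorem rather than a zero limit.
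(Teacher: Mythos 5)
Your proposal is correct and follows essentially the same route as the paper: contraposition of Lemma \ref{lemma1_noisesmooth}, combined with Theorem \ref{th:state_noisesmooth} to guarantee the state error eventually stays below $\varepsilon$, and then letting $\gamma$ decrease to the threshold $\frac{2\gamma_0}{\varepsilon_0}\sup_{t>0}\|\tilde{d}_\alpha^\delta(t)\|_X$. Your write-up is in fact slightly more careful than the paper's (which contains a small slip, concluding for all $t_1\geq T_1$ where $t_1\geq T_2$ is what the argument actually delivers — exactly as you state it).
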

\begin{proof}
By contraposition in Lemma \ref{lemma1_noisesmooth}, for all $\gamma>\frac{2 \gamma_0}{\varepsilon_0}\sup_{t>0}\|\tilde{d}^\delta_\alpha(t)\|_X$ there exists $\varepsilon >0$, $T>0$, $T_1>0$ such that for all $t_1\geq T_1$ and for all $t_2\in[t_1,t_1+T]$: $\|r^\delta_\alpha(t_2)\|_X < \varepsilon$ implies $\|e(t)\|_Q \leq < \gamma$.

So for given $\gamma >\frac{2\gamma_0}{\varepsilon} \sup_{t>0}\|\tilde{d}^\delta_\alpha(t)\|_X$ we choose $\varepsilon$ and  $T_1>0$ according to Lemma \ref{lemma1_noisesmooth}. Then from Theorem \ref{th:state_noisesmooth} it follows that there exists $T_2 \geq T_1$ such that for all $t \geq T_2$ we have $\|r^\delta_\alpha(t_2)\|_X <\varepsilon$, hence by the above $\|e(t_1)\|_Q < \gamma$ for all $t_1 \geq T_1$. Since $\gamma$ can be chosen arbitrarily close to $\frac{2 \gamma_0}{\varepsilon_0}\sup_{t>0}\|\tilde{d}^\delta_\alpha(t)\|_X$ the assertion follows.

%
\end{proof}
\section{Numerical experiments}
\subsection{Identification of a coefficient in a degenerate diffusion equation}
Consider the problem of identifying $q=q(x)$ on a domain $\Omega\subseteq\R^d$ in the (possibly degenerate) parabolic initial boundary value problem
\begin{eqnarray}\label{eq:PDEdiff}
u_t(t,x)-\nabla(D(x)\nabla u(t,x))+q(x)u(t,x)=f(t,x)\mbox{ in }\Omega\\
u(t,x)=g(t,x)\mbox{ on }\partial \Omega \nonumber\\
u(0,x)=u_0(x) \nonumber
\end{eqnarray}
from measurements of the state  $u$ on a subdomain $\omega\subseteq \Omega$
\begin{equation}\label{eq:obsdiff}
z(t,x)=Gu(t,x)=u(t,x)\vert_\omega \nonumber
\end{equation}
Here $f(t)\in L^2(\Omega)$, $g(t)\in H^{\frac12}(\partial\Omega)$, $D\in L^\infty(\Omega)$, are assumed to be known and chosen such that for $q=q^*$ a solution $u(t)=u^*(t)\in H^2(\Omega)$ to (\ref{eq:PDEdiff}) exists for all times $t>0$. 
With the spaces 
$$
Q=H^s(\Omega)\,, \quad X=L^2(\Omega)\,, \quad Z=L^2(\omega)\,,
$$
where $s>\frac{d}{2}$ so that $Q$ is continuously embedded in $L^\infty(\Omega)$,
and the operators defined by 
$$
C(q,u)=B(u)+A(u)q \,, \quad (B(u),v)_X=\int_\Omega (D\nabla u)^T\nabla v\, dx \,, \quad A(u)q=qu \,, \quad G u=u\vert_\omega\, ,
$$
this fits into the framework of the previous sections with an appropriate choice of the spaces $\Vtil, \Vhat,\VXtil,\VXhat$ and the operators $M, N$, see below.
Note that this formulation corresponds to the standard semigroup formulation for parabolic problems in case $D>0$ (see, e.g., \cite{Evans98}). However we do not assume $D$ to be positive, not even nonnegative, hence the monotonicity assumption from \cite{Kuegler10} fails even if we use the setting there with the problem adapted spaces $V=\{v\in L^2(\Omega)\, | \, \sqD\nabla v\in L^2(\Omega)\}$ with the norm $\|v\|_V=\Bigl(\|\sqrt{|D|}\nabla v\|_{L^2(\Omega)}^2+\|v\|_{L^2(\Omega)}^2\Bigr)^{\frac{1}{2}}$, $H=L^2(\Omega)$ (with the notation $V$ and $H$ from \cite{Kuegler10}). 
The case $D<0$, often denoted as antidiffusion, for example occurs in certain models of pattern formation, see e.g. \cite{antidiffusion}.

We first of all define the spaces $\Vtil,\Vhat$ such that the Lipschitz condition on $C$ from Assumption \ref{ass:equation} holds:
\begin{eqnarray*}
\fl \Vtil&=\{v\in L^2(\Omega)\, | \, \overline{\mbox{supp}(D\nabla v)}\subseteq \omega \,, \mbox{ supp }v \subseteq \omega \mbox{ and }
\nabla(D\nabla v\vert_\omega)\in L^2(\omega)\}\\
\fl \Vhat&=\{v\in L^2(\Omega)\, | \, \overline{\mbox{supp}(D\nabla v)}\subseteq \Omega\setminus\omega \,, \mbox{ supp }v\subseteq \Omega\setminus\omega \mbox{ and }
\nabla(D\nabla v\vert_\cmega)\in L^2(\Omega\setminus\omega)\}
\end{eqnarray*}
with norms
\begin{eqnarray*}
\fl \|v\|_\Vtil= \|\nabla(D\nabla v\vert_\omega)\|_{L^2(\omega)}+\|v\|_{L^2(\Omega)}\,, \quad
\|v\|_\Vhat= \|\nabla(D\nabla v\vert_\cmega)\|_{L^2(\cmega)}+\|v\|_{L^2(\Omega)}\,.
\end{eqnarray*}
and their smooth counterparts
\begin{eqnarray*}
\fl \Vtil^\infty=\{\phi\in C_0^\infty(\Omega)\, | \, \overline{\mbox{supp}(\phi)}\mbox{ is a compact subset of }\omega\}\\
\fl \Vhat^\infty=\{\psi\in C_0^\infty(\Omega)\, | \, \overline{\mbox{supp}(\psi)}\mbox{ is a compact subset of }\cmega\}
\end{eqnarray*}
which are dense in $\Vtil$ and $\Vhat$, and whose sum $\Vtil+\Vhat$ is dense in $L^2(\Omega)$. 
Therewith the Lipschitz condition on $C(q^*,\cdot)$ is obtained as follows. For any $\phi+\psi\in\Vtil+\Vhat$ we have 
\begin{eqnarray*}
\fl (C(q^*,u^*(t)+v+w)-C(q^*,u^*(t)),\phi+\psi)_X\\
\fl =\int_\Omega \Bigl((D\nabla (v+w))^T\nabla (\phi+\psi)+q^*(v+w)(\phi+\psi)\Bigr)\, dx \\
\fl =\int_\Omega q^*(v+w)(\phi+\psi)\, dx 
+\int_\omega (D\nabla v\vert_\omega)^T\nabla \phi\vert_\omega\, dx 
+\int_\cmega (D\nabla w\vert_\cmega)^T\nabla \psi\vert_\cmega\, dx \\
\fl =\int_\Omega q^*(v+w)(\phi+\psi)\, dx 
-\int_\omega \nabla(D\nabla v\vert_\omega)\phi\vert_\omega\, dx 
-\int_\cmega \nabla(D\nabla w\vert_\cmega)\psi\vert_\cmega\, dx\\
\fl=\int_\Omega \Bigl(
-\chi_\omega[\nabla(D\nabla v\vert_\omega)]
-\chi_\cmega[\nabla(D\nabla w\vert_\cmega)]
+q^*(v+w)\Bigr)(\phi+\psi)\, dx\\
\fl\leq \max\{1,\|q^*\|_{L^\infty(\Omega)}\} (\|v\|_\Vtil+\|w\|_\Vhat) \|\phi+\psi\|_{L^2(\Omega)} \,.
\end{eqnarray*}
where $\chi_\omega:L^2(\omega)\to L^2(\Omega)$, $\chi_\cmega:L^2(\cmega)\to L^2(\Omega)$ denote the operators defined by the respective extension by zero to all of $\Omega$.
 
The operator $A(u)$ can be estimated as follows:
For all $v \in X$ we get
\begin{eqnarray*}
\fl \|A(u^\ast+v)\|_{Q\rightarrow X} =\|(u^\ast+v)q\|_{Q\rightarrow X} =\sup_{q\in Q, q\neq 0}\frac{\|(u^\ast+v)q)\|_X}{\|q\|_Q}\\
\fl \leq \sup_{q\in Q, q\neq 0}\frac{\|u^\ast+v\|_{L^2(\Omega)}\|q\|_{L^\infty{\Omega}}}{\|q\|_{H^s(\Omega)}} \leq \sup_{q\in Q, q \neq 0 }C_{H^s \rightarrow L^\infty}\frac{\|u^\ast+v\|_{L^2(\Omega)}\|q\|_{H^s(\Omega)}}{\|q\|_{H^s(\Omega)}} \\
\fl \leq C_{H^s \rightarrow L^\infty}(\|u^\ast\|_{L^2(\Omega)}+\|v\|_{L^2(\Omega)}),
\end{eqnarray*}
i.e. (\ref{AbdX}) in Assumption is satisfied with $C_A=C_{H^s \rightarrow L^\infty}\max \{1,\sup_{t>0}\|u^\ast(t)\|_L^2(\Omega)\}$ which by continuity of the embeddings $\Vtil \hookrightarrow X$ and $\Vtil+\Vhat \hookrightarrow X$ implies (\ref{AbdV}) and (\ref{AbdVV}).

The nullspace of $G$ and its orthogonal complement are given by 
\begin{eqnarray*}
\fl \cN(G)=\{w\in L^2(\Omega)\, | \, w\vert_\omega=0\}=\{w\in L^2(\Omega)\, | \, \mbox{supp}(w)\subseteq\cmega\}\,, \\
\fl \cN(G)^\bot =\{v\in L^2(\Omega)\, | \, v\vert_\cmega=0\}=\{v\in L^2(\Omega)\, | \, \mbox{supp}(v)\subseteq\omega\}\,, 
\end{eqnarray*}
and the respective projections are defined by 
$$
Ru=\chi_\omega[u\vert_\omega]\,, \quad
Pu= u-Ru=\chi_\cmega[u\vert_\cmega]\,. 
$$
We define the operators $M,N$ and the spaces $\VXtil,\VXhat$ as follows.
\begin{eqnarray}
(Mv,\phi)_X&=&\int_\omega (|D|\nabla v\vert_\omega)^T\nabla \phi\vert_\omega \, dx +\int_\Omega v\phi\, dx \quad \forall \, v\in \Vtil, \phi\in\Vtil^\infty
\label{MN1}\\
(Nw,\psi)_X&=&\int_\cmega (|D|\nabla w\vert_\cmega)^T\nabla \psi\vert_\cmega \, dx +\int_\Omega w\psi\, dx \quad \forall \, w\in \Vhat, \psi\in\Vhat^\infty
\label{MN2}
\end{eqnarray}
(making use of the fact that the spaces $\Vtil^\infty,\Vtil^\infty$ are dense in $\Vtil,\Vtil$, respectively).
Hence assuming that $D$ does not change its sign on $\omega$ and on $\cmega$ 
$(D\geq0 \mbox{ a.e. on }\omega \mbox{ or }D\leq0 \mbox{ a.e. on }\omega )$ and 
$(D\geq0 \mbox{ a.e. on }\cmega \mbox{ or }D\leq0 \mbox{ a.e. on }\cmega )$
we get
\begin{eqnarray*}
\fl \|Mv\|_X=\sup_{\phi\in\Vtil^\infty, \,\phi\not=0}\frac{(Mv,\phi)_X}{\|\phi\|_{L^2(\Omega)}}\leq \|v\|_\Vtil
\quad \forall \, v\in \Vtil\\
\fl \|Nw\|_X=\sup_{\psi\in\Vhat^\infty, \,\psi\not=0}\frac{(Mw,\psi)_X}{\|\psi\|_{L^2(\Omega)}}\leq \|w\|_\Vhat
\quad \forall \, w\in \Vhat
\end{eqnarray*}
and 
\begin{eqnarray*}
\fl (Mv,v)_X=\|\sqD\nabla v|_\omega\|_{L^2(\omega)}^2 +\|v\|_{L^2(\Omega)}^2=:\|v\|_{\VXtil}^2
\quad \forall \, v\in \Vtil\\
\fl (Nw,w)_X=\|\sqD\nabla w|_{\Omega \setminus \omega}\|_{L^2(\cmega)}^2 +\|w\|_{L^2(\Omega)}^2=:\|w\|_{\VXhat}^2
\quad \forall \, w\in \Vhat
\end{eqnarray*}
Since in this case $G$ has closed range we need not regularize in case of noisy data, i.e., we can set $\alpha=0$:
$$
u_\alpha^\delta=G^\dagger \zdel=\chi_\omega[\zdel]\,, \quad 
d_\alpha^\delta=G_\alpha(\zdel-z)=\chi_\omega[\zdel-z]=\chi_\omega[\zdel-u\vert_\omega]\,.
$$

In our implementation we consider the one dimensional case with 
domain $\Omega=(0,1)$.
The right hand side $f$ is given by $f(t,x)=\frac{1}{1+t}(D\pi^2-\frac{1}{1+t}+q^\ast(x))\sin(\pi x)$, where the exact parameter $q^\ast$ is a quadratic polynomial, $q^\ast=0.025x^2-0.025x$.
For simplicity the diffusion coefficient is chosen to be constant, $D=1$.

For solving the partial differential equation system (\ref{eq:est_exact_q})-(\ref{eq:est_exact0}) we derive its variational formulation and discretize the spaces $Q$ and $X$ by cubic Hermite basis functions $\phi_j$ and $\psi_j$ for $j=2,...,N-1$ on a uniform mesh $0=x_1 <x_2 < ...<x_N=1$, where $N=31$ in our case.
$$
\phi_j(x)=\cases{-2\left(\frac{x-x_{j-1}}{h}\right)^3+3\left(\frac{x-x_{j-1}}{h}\right)^2 \, \quad \mbox{ if } \,  x\in(x_{j-1},x_j) \\ 1-3\left(\frac{x-x_j}{h}\right)^2+2\left(\frac{x-x_j}{h}\right)^3 \, \quad \mbox{ if } \, x \in (x_j,x_{j+1})  \\ 0 \, \quad \mbox{ else}}
$$
$$
\psi_j(x)=\cases{h\left(\frac{x-x_{j-1}}{h}\right)^3 -h\left(\frac{x-x_{j-1}}{h}\right)^2 \, \quad \mbox{ if }x\in(x_{j-1},x_j) \\ h\left(\frac{x-x_j}{h}\right)^3-2h\left(\frac{x-x_j}{h}\right)^2+h\left(\frac{x-x_j}{h}\right) \, \quad \mbox{ if } \, x \in (x_j,x_{j+1})  \\
0 \, \quad \mbox{ else.}}
$$
The reason for using such high order spaces is the required regularity on arguments of the operators $M$, $N$ according to (\ref{MN1}), (\ref{MN2}).
After using these as ansatz and test function in the variational formulation for space discretiztion, we solve the resulting ODE System with an implicit Euler method with step size $h_t=0.6$.
The interesting cases are those with partial observations and noisy data.

In our experiments we just employ a simple heuristic choice of $\mu$ and $\nu$: In case of partial observations we used the lower bound for $\mu$, Assumption \ref{cond:mu02}, where the constant $c_1$ shows up. Therefore we solve the optimization problem $\min_{c_1}{\|R\hat{u}-Ru^\ast\|^2_X}$, where the constant $c_1$ varies in decimal steps between $0.1$ and $1000$, in order to find an appropriate $\mu$. 
For the case of noisy data the constants $\mu$ and $\nu$ vary between $0.1$ and $1000$ and we solve the minimization problem $\min_{\mu,\nu}{\|R\hat{u}-Ru^\ast\|^2_X}$.
This approach will be enhanced in future work. 

\begin{figure}\label{fig:figure1}
\includegraphics[width=\columnwidth]{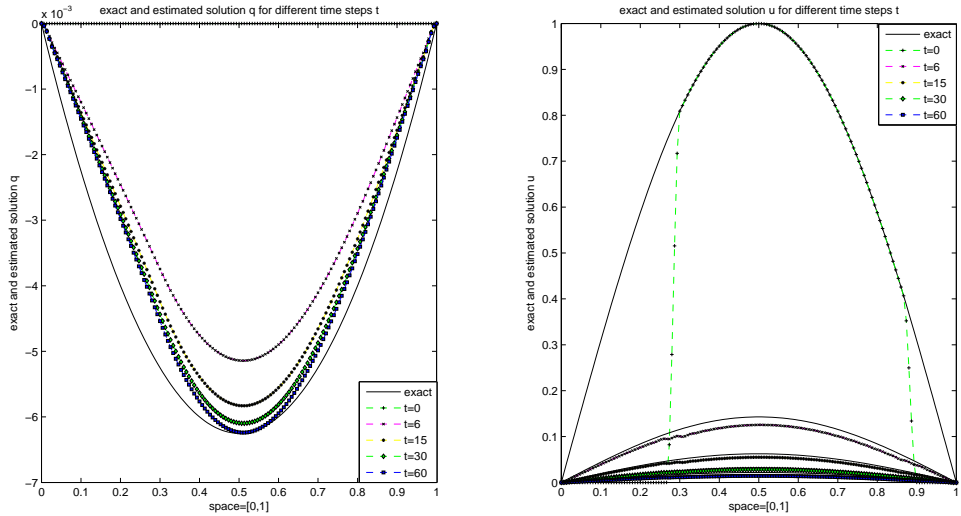}
\caption{The parameter estimate $\hat{q}(t,x)$ (left) and the state estimate $\hat{u}(t,x)$ (right) at times $t=0,6,15,30,45,60$. }
\end{figure}
To investigate the case of partial observations we restrict the data to the subinterval $\omega=(0.3,0.87)$ of $\Omega=(0,1)$.
The results for this case are shown in Figure \ref{fig:figure1}.
There on the left the estimated parameter for different times varying from $[0,60]$, are shown starting with $\hat{q}(0)=0$. The estimated parameters are the lines with markers, whereas the straight line indicates the exact parameter. 
On the right, the estimated (lines with markers) and exact (straight lines) state for different time steps are displayed. Although we have just partial observations, the state is estimated quite well also in the unobserved region $\Omega \setminus \omega$.
One can see that also the estimated parameter gets close to the exact one, but it is shifted to the right, which is due to the fact, that data are just given on the nonsymmetric interval $\omega$. Note that we do not know whether the persistence of excitation condition is satisfied here, which is in fact hard to verify in general.

\begin{figure}\label{fig:figure2}
\includegraphics[width=\columnwidth]{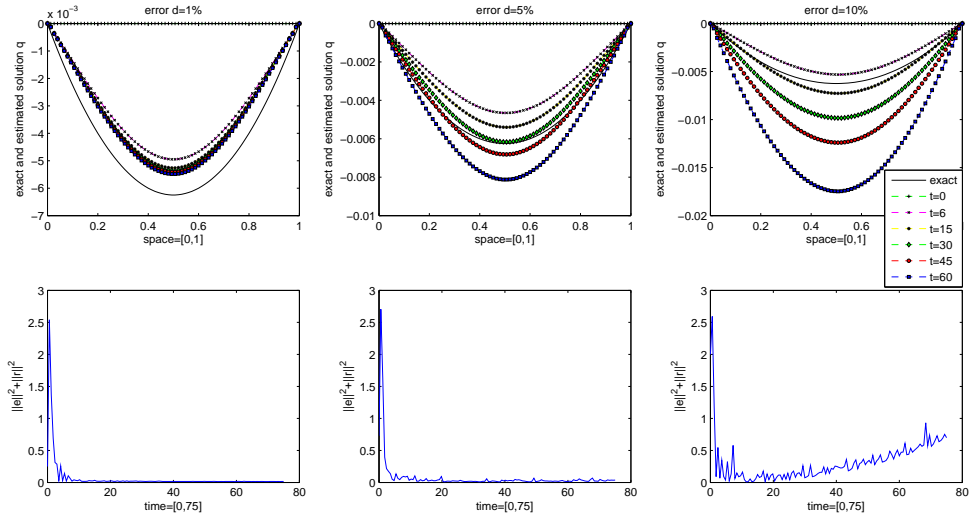}
\caption{Estimated parameter $\hat{q}(t,x)$ at different times and the observed and parameter error ($\|e\|^2_Q+\|r\|^2_X$) of Proposition \ref{well_defined_noisy} for different noise levels $\delta=1, 5, 10 \%$.}
\end{figure}																							
For the noisy data case we assumed to have data with Gaussian noise with different noise levels $\delta=1 \%, 5 \%, 10\% $. In this case of irregular noise, according to section \ref{sec:noise}, parameter and state convergence cannot be proven if $T^\ast<\infty$, so we expect closeness only for times satisfying condition (\ref{cond_d}). This can be seen in the numerical results as well, because the error is increasing from a certain time instance on, which corresponds to the semiconvergence phenomenon in regularization. As one might expect the time where the error starts to grow again gets smaller as the noise level increases.
In Figure \ref{fig:figure2} the above row shows the estimated parameter for the three different noise levels $\delta=1 \%, 5 \%, 10\%$ for different times up to $t=75$.

In Figure \ref{fig:figure2} the lower row displays the errors of the estimated observed state and parameter ($\|e\|^2_Q+\|r\|^2_X$) as in Proposition \ref{well_defined_noisy} developing over time. For small noise $\delta=1 \%$ the estimated parameter gets close to the exact one, and also the error decreases, as time proceeds, whereas for larger noise $\delta=5 \%,10 \%$ the estimated parameter first gets close to the exact parameter up to a certain time instance and then it drifts away again, hence the error increases.

\section{Conclusion and Outlook}

In this paper we have developed and analyzed an online parameter identification method for time dependent problems. The main idea was to formulate an alternative dynamic update law for the state and an additional one for the parameter estimate. 
We showed that the solution of this alternative system of differential equations is well defined and that it converges to the exact parameter and state. The proofs were done for the case of exact data as well as for the case of noisy data and smooth noisy data. The main advantages of this method are, that it imposes less restrictions on the underlying model compared to existing methods and that it is also applicable in case of partial observations. 
In a numerical example we showed the performance of our online parameter identification method. Here the results could be improved by finding optimal values for $\mu$ and $\nu$.

Another future goal is to consider time dependent parameters. This would mean that the model itself contains a dynamical update law for the parameter and therefore the online parameter identification method has to be adapted, so that this is taken into account.

\bigskip

\section*{Acknowlegdment}
This work was supported by the Karl Popper Kolleg ``Modeling-Simulation-Optimization'' funded by the Alpen-Adria-Universit\"at Klagenfurt and by the Carinthian Economic Promotion Fund (KWF).

\bigskip

\bibliographystyle{siam}
\bibliography{Paper_Online1}

\end{document}